\documentclass[a4paper,reqno,11pt]{amsart}

\newfont{\cyr}{wncyr10 scaled 1100}

\usepackage[left=2.7cm,right=2.7cm,top=3.5cm,bottom=3cm]{geometry}

\usepackage{amsthm,amssymb,amsmath,amsfonts,mathrsfs,amscd}
\usepackage{mathtools}
\usepackage{extarrows}
\usepackage[latin1]{inputenc}
\usepackage[all]{xy}
\usepackage{latexsym}
\usepackage{longtable}
\usepackage{xcolor}
\usepackage{comment}
\usepackage[shortlabels]{enumitem}
\usepackage{setspace}
\setdisplayskipstretch{}
\usepackage{multirow}
\usepackage{xcolor,colortbl}
\usepackage{changepage}
\usepackage{float}
\usepackage{tikz-cd}
\usepackage[colorlinks=true,
            linkcolor=black,
            citecolor=black,
            urlcolor=black]{hyperref}
\usepackage[backend=biber,style=alphabetic,maxnames=99,minnames=99, sorting=nyt]{biblatex}

\DeclareFieldFormat[article]{title}{#1}
\DefineBibliographyStrings{english}{
  in = {}
}
\AtBeginBibliography{\footnotesize}

\theoremstyle{plain}
\newtheorem{theorem}{Theorem}[section]
\newtheorem{corollary}[theorem]{Corollary}
\newtheorem{lemma}[theorem]{Lemma}
\newtheorem{proposition}[theorem]{Proposition}

\theoremstyle{definition}
\newtheorem{definition}[theorem]{Definition}

\newtheorem{examplewr}[theorem]{Example}

\theoremstyle{remark}
\newtheorem{obswr}[theorem]{Observation}
\newtheorem{remarkwr}[theorem]{Remark}

\definecolor{Gray}{gray}{0.85}
\definecolor{LightCyan}{rgb}{0.88,1,1}

\newcolumntype{g}{>{\columncolor{Gray}}c}
\newcolumntype{y}{>{\columncolor{LightCyan}}c}
\newcolumntype{o}{>{\columncolor{pink}}c}

\newenvironment{remark}{\begin{remarkwr}\begin{upshape}}{\end{upshape}\end{remarkwr}}

\newcommand{\Ch}{{\mathrm{Ch}}}
\newcommand{\Sh}{{\mathrm{Sh}}}
\newcommand{\Sm}{{\mathrm{Sm}}}
\newcommand{\CH}{{\mathrm{CH}}}
\renewcommand{\H}{{\mathrm{H}}}

\newcommand{\Z}{\mathbb{Z}}
\newcommand{\Q}{\mathbb{Q}}

\newcommand{\A}{\mathbb{A}}

\newcommand{\G}{\mathbb{G}}
\newcommand{\T}{\mathbb{T}}

\newcommand{\DM}{\mathrm{DM}}
\newcommand{\SH}{\mathrm{SH}}

\newcommand{\GL}{\mathrm{GL}}

\newcommand{\SL}{\mathrm{SL}}

\newcommand{\Hom}{\mathrm{Hom}}

\newcommand{\Pic}{\mathrm{Pic}}

\makeatletter
\newsavebox{\@brx}
\newcommand{\llangle}[1][]{\savebox{\@brx}{\(\m@th{#1\langle}\)}%
  \mathopen{\copy\@brx\kern-0.5\wd\@brx\usebox{\@brx}}}
\newcommand{\rrangle}[1][]{\savebox{\@brx}{\(\m@th{#1\rangle}\)}%
  \mathclose{\copy\@brx\kern-0.5\wd\@brx\usebox{\@brx}}}
\makeatother

\begin{document}

\title[Localization and motivic relations]{Localization and elliptic motivic relations}
\author{Peter Xu}

\begin{abstract}
We observe that the motivic analogue of Suslin reciprocity (and similar degree-zero statements) is a formal consequence of localization (plus purity/some six functor formalism). In particular, the statement of Suslin reciprocity for smooth schemes over fields due to Kriz is a corollary of localization for higher Chow groups, over any base; we write down the framework yielding such relations with coefficients for schemes smooth over any base in $\A^1$-invariant motivic cohomology. As an application, we refine some relations between cup products of modular units to be integral in coefficients and in the base: first, we imitate the (rational-coefficients, complex-analytic) Busuioc--Park--Patashnick--Stevens argument for full-level-$N$ elliptic schemes, extending the result to integral bases and coefficients using the elementary reciprocity statement. We then refine the construction and resulting relations to the setting of motivic sheaves; in particular, this gives analogous relations at non-full level structure, as well as over any smooth global quotient stack.
\end{abstract}

\maketitle
\tableofcontents

\section{Introduction and elementary reciprocity}

Let $C$ be a smooth proper curve over a field $k$ with function field $K$. The classical \emph{Weil reciprocity} law states that the total degree of the divisor of a rational function, summing up multiplicities of poles and zeroes, is zero. This has the following generalization to Milnor $K$-theory: for each codimension-$1$ point $x$ of $C$, write $\partial_x: K_n^M(K) \to K_{n-1}^M(k(x))$ for the associated tame symbol map in Milnor $K$-theory, introduced in \cite{Mil}, which may be explicitly calculated by decomposing products into elements of the form
\begin{equation} \label{eq:cup}
\{\pi, u_1, \ldots, u_{n-1} \} \mapsto \{\overline{u_1},\ldots, \overline{u_{n-1}}\}
\end{equation}
for a uniformizer $\pi$ in the valuation ring at $x$ and units $u_1,\ldots, u_{n-1}$; here, the bar denotes reduction to the residue field $k(x)$. Then \emph{Suslin reciprocity}, due to Bass--Tate \cite{BaTa} and Suslin \cite{Sus}, is the statement that the sum over all tame symbols on $K_n^M(K)$, transferred by finite pushforward down to $k$,
\begin{equation} \label{eq:suslin}
\varphi\mapsto \sum_{x\in C^{(1)}} \mathrm{N}_{k(x)/k}\partial_x \varphi,
\end{equation}
(noting this sum only has finitely many nonzero terms for any given element) is the zero map. For $n=1$, this recovers Weil reciprocity. 

Milnor $K$-theory is the ``simplest part'' of the more general theory of motivic cohomology, a fundamental geometric/arithmetic invariant which may be defined for schemes more general than spectra of fields. More precisely, the bigraded motivic cohomology theory on schemes $X$ (whose definition(s) we will examine later)
\[
X\mapsto H^p(X, \Z(q))
\]
coincides with the $n$th Milnor $K$-theory group in the regime $p=q=n$ for $X$ the spectrum of a field. Sophie Kriz \cite{Kr} formulated and proved the following generalization for the motivic cohomology of smooth schemes over a perfect field $k$.

\begin{theorem} \label{thm:kriz}
    Let $S$ be a smooth variety over a perfect field $k$, $f:C\to S$ be smooth projective of relative dimension $1$, and let $T\subset C$ be a reduced closed subscheme of codimension $1$ whose components are all dominant over $S$. Then the composite
    \[
    H^p(C-T, \Z(q)) \xrightarrow{\partial} H^{p+1}_T(C, \Z(q)) \xrightarrow{(f|_T)_*} H^{p-1}(S, \Z(q-1))
    \]
    is zero, where the two maps are given, respectively, by the localization sequence in higher Chow groups, and pushforward of cycles.
\end{theorem}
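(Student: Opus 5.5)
The plan is to show that the composite factors through two consecutive maps of a long exact localization sequence, so that it vanishes for formal reasons. The essential input is that $f|_T$ extends to the proper map $f:C\to S$, and that pushforward along $f$ is compatible with localization.

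\textbf{Step 1: write the pushforward through $C$.} Let $i:T\hookrightarrow C$ be the closed immersion and $j:C-T\hookrightarrow C$ the open complement. In higher Chow group language, $H^{p+1}_T(C,\Z(q))$ is identified by purity/dévissage with $\CH^{q-1}(T,2q-p-1)$. The localization sequence for $T\subset C$ reads
\[
\cdots\to H^p(C,\Z(q))\xrightarrow{j^*} H^p(C-T,\Z(q))\xrightarrow{\partial} H^{p+1}_T(C,\Z(q))\xrightarrow{i_*} H^{p+1}(C,\Z(q))\to\cdots,
\]
where $i_*$ is the cycle pushforward $\CH^{q-1}(T,\cdot)\to \CH^q(C,\cdot)$. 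Since $f$ is proper (indeed projective) of relative dimension $1$, there is a proper pushforward
\[
f_*:\CH^q(C,2q-p-1)\to \CH^{q-1}(S,2q-p-1)=H^{p-1}(S,\Z(q-1)).
\]
Functoriality of proper pushforward of cycles gives $(f|_T)_* = f_*\circ i_*$ on higher Chow groups: as maps of Bloch cycle complexes, $(f\circ i)_* = f_*\circ i_*$. Here one should check that proper pushforward is defined on the cycle complexes, or at least on homology after moving lemmas. That holds since $S$ is smooth over a field, so Bloch's complexes are graded by dimension and proper pushforward is defined for cycles in good position. The dominance hypothesis on the components of $T$ is not even needed for this step.

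\textbf{Step 2: conclude.} The composite is now
\[
(f|_T)_*\circ\partial = f_*\circ (i_*\circ \partial),
\]
and $i_*\circ\partial=0$ because these are consecutive maps in the long exact localization sequence. Hence the composite is zero.

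\textbf{Main obstacle.} The delicate point is the bookkeeping that identifies the paper's maps with cycle-level maps. First, $\partial$ must be the connecting map of Bloch's localization theorem, i.e. of the distinguished triangle $z(T,\cdot)\to z(C,\cdot)\to z(C-T,\cdot)$ in dimension grading. Second, $(f|_T)_*$ must be proper pushforward of cycles. With both gradings by dimension, $\dim T=\dim C-1$, all maps live in the category of complexes $z_d(-,\cdot)$, and the triangle is compatible with pushforward. I would verify the identity $(f|_T)_*=f_*i_*$ directly on the cycle complexes $z_d(T,\cdot)\to z_d(C,\cdot)\to z_d(S,\cdot)$, where it is literally functoriality of pushforward of cycles on $X\times\Delta^n$. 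The remaining work is only checking indices under the codimension/dimension regrading.
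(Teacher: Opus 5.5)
Your proposal is correct and matches the paper's argument. The paper derives this statement from Theorem \ref{thm:naive}, whose proof is exactly yours: $(f|_T)_*$ factors as $f_*\circ i_*$, and $i_*\circ\partial=0$ because these are consecutive maps in Levine's localization triangle.

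One small aside: proper pushforward on the dimension-graded cycle complexes does not need $S$ to be smooth. Smoothness is used only to identify higher Chow groups with $H^p(-,\Z(q))$, which is why the paper can state the general version without smoothness hypotheses.
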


Kriz's proof interplays the Morel--Voevodsky presheaves-with-transfers model for triangulated mixed motives over a field with Bloch's classical theory of higher Chow groups. Our first observation is that, in fact, the existing theory of higher Chow groups alone affords a proof of Theorem \ref{thm:kriz} even over more general bases and with \emph{no} smoothness hypothesis.

We quickly recall the higher Chow group theory: let $X$ be \emph{any} finite-type scheme over a regular base of dimension at most $1$. Then after Bloch \cite{B} (in the field case) and Levine \cite{L2}, the Zariski sheaves of higher cycle complexes on $X$ are
\[
    \Z(n)^{BL}_{X} := \bigl(U\mapsto z^n(U\times \Delta^\bullet)\bigr)
\]
where $z^n(U\times \Delta^i)$ is the module of codimension-$n$ cycles on the algebraic $i$-simplex $\Delta^i$ over $U$ which intersect all simplicial faces properly, with the natural face differentials. The higher Chow groups of $X$ are then defined by
\[
\CH^p(X, q) := \mathbb{H}_q(X, U\mapsto z^p(U\times \Delta^\bullet))
\]
and inherit proper pushforward and flat pullback functoriality from the complexes. When $X$ is defined over a field, the hypercohomology spectral sequence collapses immediately and the higher Chow groups may be computed simply from the complex of global sections $z^p(X\times \Delta^\bullet)$; in general, for the same reason, the higher Chow groups may always be computed in hypercohomology after pushforward to the base rather than on all of $X$. Note that none of this requires any smoothness hypotheses, but the definition only generally gives the ``correct'' definition of motivic cohomology when $X$ is smooth over $B$, in the sense of natural identifications $\CH^q(X, 2q-p)\cong H^p(X,\Z(q))$.\footnote{Morally, Bloch--Levine's higher Chow groups are the \emph{Borel--Moore homology} theory for motivic spaces.}

In this generality, Levine constructed a localization triangle \cite[Theorem 1.7]{L2}: for $i:Z\to X$ an equidimensional closed immersion of codimension $d$, with $j:U\to X$ be the complementary open immersion, and writing $\pi$ for the projection map down to $B$ of all three schemes, this is a distinguished triangle in the derived category
\[
\pi_*\Z(n-d)^{BL}_Z[-2d] \xrightarrow{i_*} \pi_*\Z(n)_X^{BL} \xrightarrow{j^*} \pi_*\Z(n)_U^{BL}.
\]
The localization sequence has as immediate corollary the following strengthening of Theorem \ref{thm:kriz}, which comes from assuming $f$ of relative dimension $1$ and $C,S$ smooth over a field in the statement below.

\begin{theorem} \label{thm:naive}
    Let $f:C\to S$ be a proper map of schemes of relative dimension $d$ over a Dedekind scheme $B$, and let $T\subset C$ be a closed subscheme of $C$ of codimension $d$ finite over $S$. The composition
    \[
        \CH^q(C-T, 2q-p)\xrightarrow{\partial} \CH^{q-d}(T,2q-p-1) \xrightarrow{(f|_T)_*} \CH^{q-d}(S, 2q-p-1)
    \]
    is zero, where the maps come from the connecting map in the localization sequence for $T\hookrightarrow C$, respectively the pushforward of the restriction of $f$. 
\end{theorem}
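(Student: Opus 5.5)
The plan is to use naturality of Levine's localization triangle under proper pushforward. Write $j:C-T\hookrightarrow C$ and $i:T\hookrightarrow C$. The first step is to note that the localization sequence of \cite[Theorem 1.7]{L2} is functorial for proper maps. Levine's construction identifies $\pi_*\Z(n)^{BL}_U$ with the quotient complex $z^n(X\times\Delta^\bullet)/z^{n-d}(Z\times\Delta^\bullet)$ up to a moving-lemma quasi-isomorphism. Proper pushforward of cycles commutes with restriction to opens of the target and with pushforward from closed subschemes. Hence the triangle is compatible with proper pushforward along closed subschemes and their complements.

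The second step applies this to $f:C\to S$. It is convenient to first factor through the scheme-theoretic image $f(T)\subset S$. Since $T$ is finite over $S$, $f(T)$ is closed in $S$ and $f|_T:T\to f(T)$ is finite. It is cleaner, though, to avoid introducing a closed subset of $S$ and argue directly as follows.

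The third step is the key observation that $f_*$ on $C$ is already defined. Consider the long exact sequence
\[
\CH^q(C,2q-p)\xrightarrow{j^*}\CH^q(C-T,2q-p)\xrightarrow{\partial}\CH^{q-d}(T,2q-p-1)\xrightarrow{i_*}\CH^{q}(C,2q-p-1).
\]
Here $i_*$ of a class on $T$ is just that cycle viewed on $C$, so $\CH^{q-d}(T,-)\to\CH^q(C,-)$ is the proper pushforward along $i$. In dimension indexing, the codimension in $C$ shifts by $d$, so the indices must be checked. The composite $i_*\circ\partial$ vanishes by exactness. Functoriality of proper pushforward gives $(f|_T)_*=f_*\circ i_*$, where $f_*:\CH^q(C,m)\to\CH^{q-d}(S,m)$ is the proper pushforward. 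Since $f$ has relative dimension $d$, dimension-indexed cycles are preserved and codimension drops by $d$. Therefore $(f|_T)_*\circ\partial=f_*\circ i_*\circ\partial=0$.

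The main obstacle is bookkeeping rather than conceptual. Proper pushforward on higher Chow groups over a Dedekind base must be well-defined, and the codimension reindexing must be legitimate even though $C$ and $S$ need not be equidimensional or smooth. I would handle this by working throughout with dimension-indexed cycle complexes $z_r(-\times\Delta^\bullet)$, pushed forward to $B$ as the paper indicates. In that setting proper pushforward is a map of complexes of Zariski sheaves on $B$. Levine's localization is a triangle of such complexes, and $i_*$ in the triangle is literally the pushforward along $i$. The relative dimension hypothesis on $f$ is used only to translate the dimension indexing back into the codimension indices in the statement. Finiteness of $T$ over $S$ is used only to ensure that $(f|_T)_*$ makes sense as stated, since $f|_T$ is proper regardless because $f$ is.
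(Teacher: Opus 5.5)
Your third step is exactly the paper's proof: $(f|_T)_*=f_*\circ i_*$ factors through the pushforward along $i:T\hookrightarrow C$, and $i_*\circ\partial=0$ because these are consecutive maps in Levine's localization sequence. Your first two steps (naturality of the triangle under proper pushforward, factoring through $f(T)$) are not needed. Your suggestion to work with dimension-indexed cycle complexes is a sensible way to handle the indexing when $C$ and $S$ are not equidimensional.
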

\begin{proof}
    The map $(f|_T)_*$ factors through the pushforward by the inclusion $T\hookrightarrow C$, so this follows from the composite of two consecutive arrows in the localization triangle being zero.
\end{proof}

From this perspective, Suslin reciprocity, or more generally ``sum of residues is zero,'' is a completely formal consequence of localization sequences. For applications, the interest often comes from the case when $T$ is the disjoint union of some set of components $(T_i)_{i\in I}$, in which case we obtain relations analogous to \eqref{eq:suslin} in motivic cohomology coming from the vanishing of sums of operators
\[
\sum_{i\in I} (f|_{T_i})_* \circ \partial_{T_i}=0,
\]
whose \emph{individual} terms can be interesting non-vanishing classes in the motivic cohomology of $S$. For instance, this is the use case in the article  \cite{BPPS} which motivated us to write this note, which proves certain ``Manin'' relations between cup products of Siegel units (including the Beilinson--Kato elements whose organization into an Euler system has yielded celebrated results on modular $L$-values) using classical Suslin reciprocity. Their results are with rational coefficients and at infinite level over a complex analytic base, which makes their applicability to arithmetic somewhat difficult.

The major application of the present article is to refine their results to be integral and at arbitrary finite level structure. The former issue may be resolved only with the elementary reciprocity relation for higher Chow groups, while for the latter we find it convenient to work in the formalism of motivic sheaves.

\subsection{Summary of contents}

In the remainder of this short article, we first record a slightly more sophisticated formulation of the above observation in the setting of $\A^1$-invariant motivic cohomology with coefficients, and over slightly more general bases. 

\begin{theorem}[Theorem \ref{thm:main}]
    Let $S$ be a global quotient stack of qcqs scheme, and let $f:C\to S$ be proper smooth of relative dimension $d$, and $i:T\hookrightarrow C$ be a codimension-$d$ closed immersion of smooth schemes over $S$ with open complement $j:U\hookrightarrow C$. Let $E\in \DM(S)$ be any coefficient module for motivic cohomology on $S$. Then the composite in the $\A^1$-invariant motivic cohomology of \cite{BEM}
    \[
    H^p(U,j_*f^*E(q))\to H^{p-2d+1}(T,i^* f^*E(q-d))\to  H^{p-2d+1}(S,E(q-d))
    \]
    is the zero map, where the two maps come from a localization sequence of motivic sheaves, respectively the trace map for the finite morphism $f\circ i:T\to S$.
\end{theorem}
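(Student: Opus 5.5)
The plan is to mimic the proof of Theorem \ref{thm:naive}: realize the trace map for $g := f\circ i : T\to S$ as the composite of a Gysin map $T\to C$ followed by the proper pushforward $f_*$. The composite in the statement then contains two consecutive arrows of a localization triangle, and so vanishes.

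First, I set up the localization triangle in $\DM(C)$ (six functors for $\A^1$-invariant motives of \cite{BEM}, extended to global quotient stacks by descent). For $F := f^*E(q)$ it reads
\[
i_*i^!F \to F \to j_*j^*F \xrightarrow{+1}.
\]
Absolute purity for the codimension-$d$ closed immersion $i$ of smooth $S$-schemes gives $i^!F \simeq i^*F(-d)[-2d]$, via the Thom class of the normal bundle. Taking $H^p(C,-)$ produces the connecting map $\partial$, which I identify with the first map of the statement. Note that $H^p(U, j^*f^*E(q)) = H^p(C, j_*j^*F)$, which I read as the intended meaning of the source.

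Second, I use relative purity for $f$ smooth proper of relative dimension $d$: $f^! \simeq f^*(d)[2d]$ and $f_*=f_!$. This gives a trace/counit $f_*f^*E(d)[2d] \simeq f_!f^!E \to E$. Applying $H^{p+1}(C,-)$ to $F$ yields a pushforward
\[
f_*: H^{p+1}(C, f^*E(q)) \to H^{p-2d+1}(S, E(q-d)).
\]
The key compatibility to establish is that the trace for the finite map $g$ factors as
\[
H^{p-2d+1}(T, i^*F(-d)) \xrightarrow{i_*} H^{p+1}(C,F) \xrightarrow{f_*} H^{p-2d+1}(S,E(q-d)).
\]
Here $i_*$ is the Gysin map, i.e.\ the map induced by $i_*i^!F\to F$ under purity. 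This factorization is the functoriality of the counits $(f\circ i)_!(f\circ i)^! \to \mathrm{id}$, i.e.\ $g_!g^! = f_!i_!i^!f^!\to f_!f^!\to \mathrm{id}$. It also needs the compatibility of the purity isomorphisms, $g^! \simeq i^!f^! \simeq i^*f^*(d)[2d](-d)[-2d]$, with the trace for the finite syntomic (in fact lci of relative dimension $0$) map $g$, using $g_!=g_*$.

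Given this, the composite in the statement equals $f_*\circ i_*\circ \partial$. The maps $i_*\circ\partial$ are consecutive arrows in the long exact sequence of the localization triangle, so their composite is zero, and the result follows.

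The main obstacle is the compatibility step: showing that the paper's ``trace map for the finite morphism $T\to S$'' agrees with the composite of purity-induced counits. This requires the functoriality of fundamental classes (in the sense of D\'eglise--Jin--Khan) under composition $g=f\circ i$, together with the identification of the stacky six-functor formalism with that of \cite{BEM}. I would first try to take the trace for $g$ to be defined via $g_!g^!\to\mathrm{id}$ and the purity isomorphism, reducing everything to associativity of fundamental classes. For stacks, I would reduce to schemes by smooth descent along an atlas $X\to S$, since all maps are compatible with smooth base change.
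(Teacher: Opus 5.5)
Your proposal is correct and follows essentially the same route as the paper: apply localization and the two purities on $C$, factor the trace for $g$ as $f_*\circ i_*$ via the composite counit $g_!g^!=f_!i_!i^!f^!\to f_!f^!\to\mathrm{id}$, and conclude because $i_*\circ\partial=0$. The compatibility of purity isomorphisms that you flag as the main obstacle is the step the paper asserts in a single line. One small correction: what you need for $i$ is the purity of a closed pair of smooth $S$-schemes, applied to $F=f^*E(q)$, rather than absolute purity.
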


When the base is \emph{smooth}, higher Chow groups coincide with motivic cohomology (see discussion at end of next section), and this result may be identified with our elementary formulation Theorem \ref{thm:naive}. Outside of this regime, it is not clear what significance the higher Chow groups have.

At full level, the elementary reciprocity we have already seen is enough to refine the main result of \cite{BPPS} to motivic cohomology groups integral in the coefficients and (smooth) base, which we record in Theorem \ref{thm:bpps}. This correspondingly produces modular symbols valued in cup products of Siegel units via the Bykovskii presentation (Theorem \ref{thm:modnaive}). 

To obtain more intrinsic statements not dependent on assuming full level $N$, we frame things in terms of motivic sheaves. After suitably categorifying all constructions (which all largely hinge on the sheaf version of the theta function construction Theorem \ref{thm:motivic-integral-theta}) and transposing Busuioc--Park--Patashnick--Stevens' correction of Goncharov's summation argument, we obtain the following motivic sheaf level analogues of our main theorems, whose statements we give a little impressionistically here for lack of technical background:

\begin{theorem}[Theorem \ref{thm:sheafrel}]
    Let $\mathcal{M}$ be a smooth global quotient stack on which $N$ is invertible, and $E$ an elliptic scheme over it. Then there is a homotopy class morphism of motivic sheaves on $\mathcal{M}$
    \[
    \Psi: \text{motive of }E^3[N] \to \G_{m,\mathcal{M}}^{\otimes 2}
    \]
    which is null-homotopic after multiplication by a certain constant $N^3 d_N$. (Here, the tensor power is from the monoidal structure on motivic sheaves on $\mathcal{M}$, realized as smash product over the Eilenberg--MacLane spectrum.) The specializations of this vanishing when $E[N]$ is a constant group scheme yield the Manin relations of \cite{BPPS}.
\end{theorem}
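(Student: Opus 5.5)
Here is the plan. I construct $\Psi$ as a composite of a sheaf-level theta/Siegel-unit class with the localization boundary and trace. Then I show that $N^3 d_N\Psi$ is null by the reciprocity mechanism of the main theorem (Theorem \ref{thm:main}): the relevant composite is two consecutive arrows of a localization triangle.

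First I fix the geometry. Write $p: E^3 \to \mathcal{M}$ (more precisely a suitable fibre product, e.g. $E\times_{\mathcal{M}} E$ or a quotient of $E^3$ by the diagonal, so that the relative dimension matches the codimension of the divisors). I take $D\subset E^2$ to be the union of the translated ``Goncharov'' divisors: kernels of the maps $(x,y)\mapsto ax+by$ shifted by $N$-torsion. I take $T$ to be the finite étale scheme of $N$-torsion intersection points, so that $T\to\mathcal{M}$ is finite étale over $E^3[N]$, or more precisely $E^3[N]$ up to the diagonal identification. By Theorem \ref{thm:motivic-integral-theta}, for each relevant isogeny $\lambda$ there is an integral theta class $\theta$, a morphism to $\G_m$ on the complement of $E[\lambda]$, whose divisor is $N^2[0]-E[N]$ up to the constant $d_N$. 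Pulling these back along the two coordinate maps $E^2\to E$ and cupping gives a morphism
\[
\theta_1\cup\theta_2 : M(E^2 - D)\to \G_m^{\otimes 2}.
\]
The localization boundary $\partial$ for $D\hookrightarrow E^2$, followed by a second boundary along the intersections $T\subset D$, applied to $\theta_1\cup\theta_2\cup\theta_3$ (one more theta on the third factor), gives a class on $T$. Dualizing via the trace (purity for the finite étale $T\to\mathcal{M}$) turns this into a map $M(E^3[N])\to \G_m^{\otimes 2}$ on $\mathcal{M}$, and I define $\Psi$ to be this map. For constant $E[N]$, its components are then the individual cup products $g_a\cup g_b$ of Siegel units, as in \cite{BPPS}.

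For the vanishing, Theorem \ref{thm:main}, applied with $S=\mathcal{M}$ (a smooth global quotient stack) and $C$ the relevant proper smooth abelian scheme, says that the trace of the boundary of any class on $C-T$ is zero. The task is therefore to exhibit $N^3 d_N\Psi$ as a sum of such trace-of-boundary expressions. I would transpose the BPPS correction of Goncharov's summation argument. Summing the theta cup product over all $N$-torsion translates (the distribution relation) multiplies by $N^{\text{something}}$. The constants $d_N$ arise from clearing denominators in the divisors of the integral theta functions, and $N^3$ arises from pushing forward along $[N]$ on the three factors. After these are cleared, the total boundary decomposes as the sum over components of $T$. The reciprocity statement gives $\sum_i (f|_{T_i})_*\partial_{T_i}=0$, which is precisely the vanishing of $N^3 d_N\Psi$, up to terms supported at the origin. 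Those origin terms I would kill by the symmetry $x\mapsto -x$, or by the norm-compatibility of theta functions.

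The main obstacle is making all of this homotopy-coherent on stacks. Concretely, one must check that the iterated boundary maps and the traces commute with the base change and purity isomorphisms in $\DM$ of the stack, including the signs. One must also check that the summation identity holds as an equality of morphisms, not merely of cohomology classes. I would first prove everything over an atlas, on schemes, and then descend. The descent uses that all constructions are natural in smooth base change, together with the fact that for a global quotient stack $\DM$ is computed by Čech descent. The specialization to constant $E[N]$ would then be read off componentwise and compared with Theorem \ref{thm:bpps}.
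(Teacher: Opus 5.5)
There is a genuine gap, and it begins with the definition of $\Psi$. As you describe it, $\Psi$ is a trace of iterated localization boundaries of a triple theta product. That fails on several counts:
\begin{enumerate}
\item Two codimension-one residues take a weight-$3$ class to weight $1$, not to $\G_m^{\otimes 2}=1_{\mathcal M}(2)[2]$.
\item If $\Psi$ were a single trace-of-residue of the shape in Theorem \ref{thm:main}, it would vanish outright, with no constant needed.
\item You say that at full level its components are the individual products $\{g_{\mathbf a},g_{\mathbf b}\}$. These Beilinson--Kato elements are in general not torsion (Beilinson's regulator computation detects them), so no multiple of such a $\Psi$ can be null-homotopic.
\end{enumerate}
What is torsion is the homogeneous cyclic combination. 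In the paper, $\Psi$ involves no residue at all. It is the composite $P^{\otimes 3}\to C_3\xrightarrow{C(\delta)}P_0^{\otimes 3}\xrightarrow{G_{12}\smile G_{32}}1_{\mathcal M}(2)[2]$, with $G=e^*\theta_0\circ d$ the sheafified Siegel unit. At full level this gives $\Psi(\mathbf a:\mathbf b:\mathbf c)=\{G_{\mathbf a-\mathbf b,\mathbf b-\mathbf c},G_{\mathbf c-\mathbf a,\mathbf b-\mathbf c}\}$, morally $(Nd_N)^2$ times the three-term Manin sum.

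The residue enters only through an auxiliary class. The identity tying that class to $\Psi$ is exactly the step you leave as ``exhibit $N^3d_N\Psi$ as a sum of trace-of-boundary expressions'', and it is the heart of the proof. The paper uses the single relative curve $\mathcal E\to\mathcal M$ ($d=1$, $T=\mathcal E[N]$), with no $E^2$ or Goncharov-type divisors. It builds $\Xi$ on $P_U^{\otimes 4}$ from $\theta^{\otimes 3}$ and the differences $d\circ\pi_{i4}$; fibrewise this is $\{\theta_{[\mathbf a]-[\mathbf x]},\theta_{[\mathbf b]-[\mathbf x]},\theta_{[\mathbf c]-[\mathbf x]}\}$. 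It then proves the sheaf version of \cite[Lemma 3.2.1]{BPPS}: $\mathcal R\circ\Xi^\sharp=Nd_N(\Psi\circ\pi_{123}-\Psi\circ\pi_{124}-\Psi\circ\pi_{143}-\Psi\circ\pi_{423})$. The argument then runs as follows:
\begin{enumerate}
\item Theorem \ref{thm:main} kills the left side.
\item Precomposing with $\mathrm{id}^{\otimes 3}\otimes\eta$ (summing over $\mathbf x$) turns the first term into $N^2\Psi$.
\item The categorified Goncharov trick makes the three $\mathbf x$-dependent terms $2$-torsion. It uses the substitution $\mathbf x\mapsto\mathbf x-\mathbf a-\mathbf b$ over $T^2$, alternation, and then $\mathbf y\mapsto-\mathbf y$.
\end{enumerate}
Hence the constant is $2N^3d_N$. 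Here $Nd_N$ is the multiplicity in the divisor of $\theta$, and $N^2$ is the degree of $T\to\mathcal M$, not a pushforward along $[N]$ on three factors. What the symmetry kills are these $\mathbf x$-terms, not terms at the origin.

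Finally, proving identities on an atlas and descending would not work integrally. Equality of homotopy classes is not detected on a cover, and trivializing $\mathcal E[N]$ needs an \'{e}tale cover whose descent costs exactly the degrees the theorem is meant to keep. The paper instead uses the adjunction $\Hom_{h\DM(\mathcal M)}(P^{\otimes k},1_{\mathcal M}(2)[2])\cong H^2(T^k,\Z(2))$ for the finite \'{e}tale map $T^k\to\mathcal M$. This reduces every identity exactly to the universal full-level case on $T^k$, with its tautological sections.
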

And its modular symbol consequence:

\begin{theorem}[Theorem \ref{thm:mod}]
    There exists a $\GL_n(\Z)$-equivariant modular symbol
    \[
    \mathrm{St}(\Q^n) \to (\text{homotopy classes of maps from motive of }E[N]^n\text{ to } \G_{m,\mathcal{M}}^{\otimes n})
    \]
    sending the standard apartment class $[e_1,\ldots, e_n]$ to $N^3 d_n\mathfrak{g}^{\otimes n}$, where $\mathfrak{g}$ is a ``Siegel map'' 
    \[
    \mathfrak{g}:\text{motive of }E[N] \to \G_{m,\mathcal{M}}
    \]
    which specializes under certain natural evaluations to classical constructions of Siegel units.
\end{theorem}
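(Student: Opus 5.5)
The plan is to build the modular symbol from a Bykovskii-type presentation of $\mathrm{St}(\Q^n)$, as in the full-level version Theorem \ref{thm:modnaive}, but now with homotopy classes of maps of motivic sheaves on $\mathcal{M}$ as the target. Recall the presentation: $\mathrm{St}(\Q^n)$ is the quotient of the free abelian group on symbols $[v_1,\ldots,v_n]$, one for each ordered basis of $\Z^n$, by three families of relations:
\begin{enumerate}[(i)]
\item $[v_{\sigma(1)},\ldots,v_{\sigma(n)}]=\mathrm{sgn}(\sigma)[v_1,\ldots,v_n]$ for permutations $\sigma$;
\item $[-v_1,v_2,\ldots,v_n]=[v_1,\ldots,v_n]$;
\item the three-term Manin relation $[v_1,v_2,v_3,\ldots]-[v_1+v_2,v_2,v_3,\ldots]-[v_1,v_1+v_2,v_3,\ldots]=0$.
\end{enumerate}
$\GL_n(\Z)$ acts on $E[N]^n = \Z^n\otimes E[N]$ by automorphisms over $\mathcal{M}$, hence on homotopy classes of maps out of its motive by pullback. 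I will define the symbol on generators by
\[
[g e_1,\ldots,g e_n]\longmapsto (g^{-1})^*\bigl(N^3 d_n\,\mathfrak{g}^{\otimes n}\bigr),
\]
where $\mathfrak{g}^{\otimes n}$ is the external cup product of the pullbacks of $\mathfrak{g}$ along the $n$ projections $E[N]^n\to E[N]$, landing in $\G_{m,\mathcal{M}}^{\otimes n}$. Equivariance then holds by construction. What remains is to check that the relations are killed.

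\textbf{Relations (i) and (ii).} For (i), permuting the basis permutes the tensor factors. Graded-commutativity of the monoidal structure on motivic sheaves, via the smash product over the Eilenberg--MacLane spectrum, makes each $\G_m$ factor (weight $1$, degree $1$) anticommute. This yields exactly $\mathrm{sgn}(\sigma)$, up to the usual $\{x,-1\}$-type two-torsion ambiguity. That ambiguity should be absorbed by the even constant $N^3d_n$, or avoided by checking it directly. For (ii), I need $[-1]^*\mathfrak{g}=\mathfrak{g}$, possibly only after multiplying by the constant. I expect this to follow from the parity of the theta-function construction in Theorem \ref{thm:motivic-integral-theta}: the distinguished section is invariant under inversion because its divisor is.

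\textbf{Relation (iii).} Since the relation involves only the first two basis vectors, the external product structure reduces it to the case $n=2$, tensored with $\mathfrak{g}^{\otimes(n-2)}$ on the remaining factors. After translating by $\GL_2(\Z)$, the $n=2$ case is the statement that the combination $\mathfrak{g}\otimes\mathfrak{g}$ evaluated at $(a,b)$, $(a+b,b)$ and $(a,a+b)$ vanishes. I will identify this combination with the pullback of $\Psi$ from Theorem \ref{thm:sheafrel} along the map $E^2[N]\to E^3[N]$, $(a,b)\mapsto(a,b,-a-b)$, or whatever parametrization Theorem \ref{thm:sheafrel} actually uses. The vanishing of $N^3d_N\Psi$ then kills the relation.

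\textbf{Main obstacle.} The hard step is this identification of the Manin three-term combination with (a pullback of) $\Psi$, at the level of homotopy classes rather than merely after specialization. The summation argument behind $\Psi$ produces several cross terms, which must be shown to cancel or to be rearranged by relations (i) and (ii) into the three-term shape. I would first try to carry this out on constant level ($E[N]$ constant), where it should match the BPPS computation. I would then argue that both sides are defined compatibly with base change and descent to the global quotient stack $\mathcal{M}$, so that the identity holds in general. The constant $d_n$ will also have to be tracked so that it divides the multiples needed in (i)--(iii) uniformly. Finally, the statement that $\mathfrak{g}$ specializes to classical Siegel units is checked by evaluating at torsion sections, using the theta construction.
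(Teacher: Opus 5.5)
Your skeleton is the paper's. Theorem \ref{thm:mod} is deduced exactly as Theorem \ref{thm:modnaive}: use the Bykovskii presentation, get the sign and $\pm v_1$ relations from anticommutativity and $[-1]^*\mathfrak{g}=\mathfrak{g}$, and reduce the three-term relation (after splitting off $\mathfrak{g}^{\otimes(n-2)}$) to a Manin relation killed by Theorem \ref{thm:sheafrel}.

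\textbf{The gap: descent loses integrality.} The problem is your plan for the step you rightly call the crux. You propose to identify the Manin combination with $\Psi$ when $E[N]$ is constant and then descend to $\mathcal{M}$. This does not work integrally. For a finite \'{e}tale full-level cover $\mathcal{M}'\to\mathcal{M}$ of degree $m$, pullback followed by trace is multiplication by $m$. So pullback on motivic cohomology is injective only modulo $m$-torsion, and you would get the relation only after multiplying by $m$. For instance, for the full moduli stack and $N=5$, $m=|\GL_2(\Z/5)|=480$ brings in the prime $3$, which does not divide $2N^3d_N=1250$. Avoiding exactly this loss is the point of the intrinsic formulation.

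The missing idea is the adjunction the paper uses throughout. Since $T^k\to\mathcal{M}$ is finite \'{e}tale, $P^{\otimes k}=(\pi_{T^k})_*1_{T^k}$ and
\[
\Hom_{h\DM(\mathcal{M})}(P^{\otimes k},1_{\mathcal{M}}(m)[m])\cong H^m(T^k,\Z(m)).
\]
So every identity between such maps is an identity of classes on $T^k$. There the tautological torsion sections exist, and since $T$ is \'{e}tale, any two of them agree or are disjoint on each connected component. The section-wise full-level computation then applies verbatim, with no constancy of $E[N]$ and no descent. This is how the paper proves $\mathfrak{g}\circ\pi_1-\mathfrak{g}\circ\pi_2=G$ on $P_0^{\otimes 2}$. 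From this it gets the proposition expressing $2N^3d_N\Psi$ as
\[
2N^3d_N(\mathfrak{g}\smile\mathfrak{g})\circ\delta^{\otimes 2}\circ(\pi_{12}\otimes\pi_{23}+\pi_{23}\otimes\pi_{31}+\pi_{31}\otimes\pi_{12}).
\]
The ``cross terms'' you worry about are then just bilinearity plus $\{x,x\}=\{x,-1\}$.

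\textbf{The constant.} Your bookkeeping of the factor $2$ is also wrong.
\begin{itemize}
\item $N^3d_N$ is not even for odd $N$: $d_N$ divides $(N+1)^2-1=N(N+2)$, so for example $N^3d_N=625$ when $N=5$.
\item The $2$ is not needed where you put it. For classes in $H^1(-,\Z(1))$ one has $ab=-ba$ exactly, so relation (i) holds on the nose.
\item The $2$ is needed in (iii), for three reasons. Theorem \ref{thm:sheafrel} only gives $2N^3d_N\Psi=0$, where the $2$ comes from the Goncharov trick. Expanding $\Psi$ into the three-term sum leaves a term $\{\mathfrak{g}_x,\mathfrak{g}_x\}=\{\mathfrak{g}_x,-1\}$. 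In the degenerate cases where one of $x_1,x_2,x_1+x_2$ is $0$, $\Psi$ is extended by zero and $\mathfrak{g}_0=1$, so the relation reduces to such a $2$-torsion symbol.
\end{itemize}
So the argument yields the symbol with $2N^3d_N\,\mathfrak{g}^{\otimes n}$, which is the constant in the body statement of Theorem \ref{thm:mod}.

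Finally, $\Psi$ depends only on differences. The relevant pullback is therefore along a map like $(x_1,x_2)\mapsto(x_1+x_2,x_2,0)$, not $(a,b)\mapsto(a,b,-a-b)$.
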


Both of these motivic sheaf level statements, under suitable specialization, imply identities between motivic cohomology elements at coarser-than-full-level-$N$ which cannot be deduced directly from the full level $N$ statements without losing integral information (since \'{e}tale descent for motivic cohomology requires inverting the degree of the map). The arguments to prove these refinements largely rely on exploiting the six functor formalism to reduce to some ``universal'' case, where the naive manipulations directly work just as in the full-level $N$ setting.

\subsection{Acknowledgements} Thanks to Sophie Kriz for responding to emails and expressing interest, Cecilia Busuioc for a motivating conversation on applications, and Tess Bouis for some helpful guidance on motives.

\section{Localization in $\SH$ and motivic relations}

In this section, we assemble the necessary formalism from the literature to obtain a general ``Suslin reciprocity'' type of statement (Theorem \ref{thm:main}) in the setting of $\A^1$-invariant motivic cohomology with coefficients, and then use Spitzweck's model to check that the maps involved coincide with the elementary ones in the introduction in the absolutely smooth case (i.e. the regime in which higher Chow groups are expected to coincide with motivic cohomology). We note that for the example application in the following section, we actually only need the trivial-coefficients setting, but we believe the more general formulation may be useful in understanding relations between motivic polylogarithms.

Let $X$ be a separated Noetherian scheme over $B$, or more generally a global quotient stack of such a scheme by a fixed algebraic group (as in the introduction of \cite{Hoy}). We write $\Sm_{X,t}$ for the category of smooth schemes over $X$ endowed with a Grothendieck topology $t$, and $\Sh(\Sm_{X,t}, \mathrm{Ani})$ the $\infty$-category of sheaves of anima on the associated site. Let $\T_X := \A^1 / \G_m$ be the Tate object in $\Sh(\Sm_{X,Nis}, \mathrm{Ani})$, defined as the cofiber of the indicated inclusion of represented sheaves.
\begin{definition}
We have the following categories associated to $X$:
\begin{itemize}
    \item The \emph{unstable motivic homotopy category} $\H(X)$ is the localization of the homotopy category of $\Sh(\Sm_{X,Nis}, \mathrm{Ani})$ along all projection maps $S\times \A^1 \to S$ (or rather the associated sheaves) for any $S\in \Sm_{X,Nis}$. 
    \item The \emph{stable homotopy category} $\SH(X):= \H_*(X)[\T_X^{-1}]$ is obtained by freely smash-inverting the Tate object in the \emph{pointed} motivic homotopy category (i.e. the undercategory of the terminal object).
\end{itemize}
\end{definition}
One defines an oriented $E_\infty$-ring spectrum $H\Z_X$ in the stable homotopy category $\SH(X)$ via the slice filtration on $K\GL$ \cite{BEM}, and one then defines the stable $\infty$-category, respectively triangulated category, of motives over $X$ to be the module category and its homotopy category:
\[
h\DM(X):= H\Z_X\text{-}\mathrm{Mod},\;\;\;\DM(X) := \mathrm{Ho}(H\Z_X\text{-}\mathrm{Mod}).
\]
These are endowed with symmetric monoidal structures via the smash product of $H\Z_X$-modules. We set
\[
    H^i(X,\Z(n)) := \pi_0\Hom_{\SH(X)}(1, \Sigma^{i,n} H\Z_X) \cong \pi_0\Hom_{\DM(X)}(\Z(0)_X,\Z(n)_X[i]),
\]
the latter isomorphism being the free/forgetful adjunction between the free $H\Z_X$-module functor $\SH(X) \to \DM(X), \mathcal{S}\mapsto H\Z_X \wedge \mathcal{S}$ and the forgetful inclusion functor $\DM(X) \to \SH(X)$. Here $\Sigma^{i,n}$ denotes the bigraded suspension functor in $\SH(X)$ coming from smash product with $(S^1)^{\wedge i} \wedge \T^{\wedge n}$, and $\Z(k)_X$ is the Tate twisted coefficient module
\[
    \Z(k)_X := H\Z_X \wedge \T^{\wedge k}
\]
where $H\Z_X$ acts on the left smash factor, so $\Sigma^{i,n} H\Z_X$ and $\Z(n)_X[i]$ are just two names for the same object (identified via the forgetful functor) in our chosen nomenclature in their respective categories. More generally, given $E\in \DM(X)$, we may set 
\[
H^i(X, E(n)) := \pi_0\Hom_{\SH(X)}(1, \Sigma^{i,n}E) \cong \Hom_{\DM(X)} (\Z(0)_X, E(n)[i]).
\]
The base-change compatibility property $f^* H\Z_Y \cong  H\Z_X$ for $f:X\to Y$ implies that the motivic cohomology of $X$ may always equally be computed on the base as $H^i(Y, f_*E(n))$, as one expects.

We appeal to \cite{Hoy} for the six-functor formalism in $\SH(X)$: for a separated morphism of finite type $f:X\to Y$, there are adjoint pairs of functors
    \[
    f^*:\SH(Y) \rightleftarrows \SH(X): f_*,\quad f_!: \SH(X) \rightleftarrows \SH(Y): f^!
    \]
satisfying a series of compatibilities, including that $f_*=f_!$ for proper $f$. The following are our needed formal properties:

\begin{proposition}
Let $S$ be a qcqs scheme or global quotient stack of such a scheme, and write
\[
X\longmapsto \SH(X)
\]
for the stable motivic homotopy category. The following properties hold (e.g. as recorded in \cite[Theorem 6.18]{Hoy}, or \cite[Theorem 3.2.23]{MV} for purity of a closed pair).

\begin{enumerate}
    \item \textup{(localization)}
    Let $i:Z\hookrightarrow X$ be a closed immersion with open complement $j:U\hookrightarrow X$. Then for every $E\in \SH(X)$ there is a functorial localization triangle
    \[
    i_*i^!E\longrightarrow E\longrightarrow j_*j^*E
    \xrightarrow{\partial_Z}
    i_*i^!E[1].
    \]
    \item \textup{(relative purity)}
    If $f:X\to Y$ is smooth, then there is a functorial purity equivalence $f^!E\simeq f^*E\wedge \mathrm{Th}(T_f)$ where $T_f$ is the relative tangent bundle.

    \item \textup{(purity of a smooth closed pair)}
    If $i:Z\hookrightarrow X$ is a closed immersion of smooth schemes over a base $S$, let $N_{Z/X}$ be the normal bundle. Then there is a functorial purity equivalence
    \[
    i^!E\simeq i^*E\wedge \mathrm{Th}(-N_{Z/X}).
    \]
    \end{enumerate}
\end{proposition}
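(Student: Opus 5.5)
This proposition is a compilation of standard results, so the plan is to import each item from the literature rather than prove anything new.

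For \textup{(1)}, I would start from Hoyois' construction of $\SH$ on qcqs schemes and on global quotient stacks \cite{Hoy}. There the localization property is established in the form of a cofiber sequence $j_!j^*\to \mathrm{id}\to i_*i^*$. I would pass to right adjoints, using $i_*=i_!$ for closed immersions and $j^!=j^*$ for open immersions. This yields the fiber sequence $i_*i^!E\to E\to j_*j^*E$. Rotating it in the stable $\infty$-category $\SH(X)$ produces the boundary map $\partial_Z$. Functoriality in $E$ is automatic, since every term is an exact functor of $E$ and the counit and unit maps are natural transformations.

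For \textup{(2)}, I would cite the ambidexterity/purity theorem in \cite[Theorem 6.18]{Hoy}: for smooth $f$ one has $f_!\simeq f_\sharp\circ \Sigma^{-T_f}$. Passing to right adjoints gives $f^!\simeq \Sigma^{T_f}f^*$, and this is the smash product with $\mathrm{Th}(T_f)$.

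For \textup{(3)}, I would take the closed pair $(X,Z)$ smooth over $S$, with structure maps $p:X\to S$ and $q=p\circ i:Z\to S$. The first approach is to argue formally from \textup{(2)}. Functoriality of exceptional pullback gives $q^!\simeq i^!p^!$. Applying \textup{(2)} to both $p$ and $q$ then gives $i^!(p^*F\wedge\mathrm{Th}(T_p))\simeq q^*F\wedge \mathrm{Th}(T_q)$. The short exact sequence $0\to T_q\to i^*T_p\to N_{Z/X}\to 0$ yields $\mathrm{Th}(T_q)\simeq i^*\mathrm{Th}(T_p)\wedge \mathrm{Th}(-N_{Z/X})$ in the $K$-theory-indexed Thom spectra. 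After cancelling the invertible object $\mathrm{Th}(T_p)$, this proves the claim for objects pulled back from $S$.

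For a general $E\in\SH(X)$ this trick does not apply, and this is the main obstacle: it is genuinely the Morel--Voevodsky homotopy purity theorem. I would use deformation to the normal cone, as in \cite[Theorem 3.2.23]{MV} and its stable six-functor form in \cite{Hoy}. That argument identifies $X/(X-Z)\simeq \mathrm{Th}(N_{Z/X})$, and from this one gets $i^!$ in general via the projection formula $i^!E\simeq i^*E\wedge i^!1_X$. I would flag that this projection formula for $i^!$ requires the smooth-pair hypothesis, or at least a regularity/Tor-independence hypothesis. Finally, I would check that the stack case follows from the scheme case because \cite{Hoy} builds $\SH$ of quotient stacks compatibly with these operations.
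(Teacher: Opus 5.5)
Your treatment of (1) and (2) is fine. It amounts to the same citations the paper gives (the paper offers no argument beyond \cite[Theorem 6.18]{Hoy} and \cite[Theorem 3.2.23]{MV}). Passing to right adjoints in $j_\sharp j^*\to\mathrm{id}\to i_*i^*$ and in $f_!\simeq f_\sharp\Sigma^{-T_f}$ gives exactly the stated forms.

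Your formal derivation of (3) for $E=p^*F$ with $F\in\SH(S)$ is also correct. It uses $q^!\simeq i^!p^!$, relative purity for $p$ and $q$, invertibility of $\mathrm{Th}(T_p)$, and $\mathrm{Th}(i^*T_p)\simeq\mathrm{Th}(T_q)\wedge\mathrm{Th}(N_{Z/X})$. This is precisely the case the paper uses: in the proof of Theorem \ref{thm:main}, purity of the pair is only applied as $i^!f^*E\simeq g^*E(-d)[-2d]$, which is your argument with $p=f$ and $q=g$ \'etale.

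The gap is your last step. Item (3) is false for arbitrary $E\in\SH(X)$, and the projection formula $i^!E\simeq i^*E\wedge i^!1_X$ does not hold for smooth pairs.

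Take $X=\A^1_k$, $Z$ the origin (so $\mathrm{Th}(-N_{Z/X})\simeq\T^{\wedge -1}$), and $E=i_*1_Z$. Full faithfulness of $i_*$ gives $i^!E\simeq 1_Z\simeq i^*E$. The formula would then force $1_k\simeq\T^{\wedge-1}$. Mapping into $H\Z_k$ refutes this: one side gives $H^0(k,\Z(0))=\Z$, the other $H^2(k,\Z(1))=\Pic(k)=0$. Similarly, $E=j_\sharp 1_U$ has $i^*E=0$ but $i^!E\simeq i^*j_*1_U[-1]\neq 0$.

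The comparison map $i^*E\wedge i^!1_X\to i^!E$ is an equivalence in two cases, but not in general:
\begin{enumerate}
\item for $E$ pulled back from $S$, by your argument;
\item for dualizable $E$, by applying $i^!\underline{\Hom}(A,B)\simeq\underline{\Hom}(i^*A,i^!B)$ to $E\simeq\underline{\Hom}(E^\vee,1_X)$.
\end{enumerate}

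Deformation to the normal cone does not change this. \cite[Theorem 3.2.23]{MV} identifies $X/(X-Z)$, whose suspension spectrum is $p_\sharp i_*1_Z$, with $\mathrm{Th}(N_{Z/X})$ over $S$. By adjunction this is again only a statement about $i^!p^*F$ for $F\in\SH(S)$, which you already have.

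So (3) should be stated, and proved as you did, for $E$ in the image of $p^*$ (or for dualizable $E$). The general case is not an obstacle to be overcome; it is a false claim, and it is not needed anywhere in the paper.
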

The same formal properties are then inherited for the categories of modules over the base-change compatible assignment of oriented $E_\infty$-ring spectra $S\mapsto H\Z_S$ and in the res homotopy category. In this setting, for $f$ smooth of relative dimension $d$, relative purity becomes
    \[
    f^!E\simeq f^*E(d)[2d].
    \]
and if $i$ has codimension $c$, purity of a smooth closed pair becomes
    \[
    i^!E\simeq i^*E(-c)[-2c]
    \]
by identifying the respective Thom spectra with powers of the Tate sphere. From these formal properties, it follows in the same way as usual:
\begin{theorem}\label{thm:main}
    Let $S$ be a qcqs scheme or global quotient stack thereof as before, and let $f:C\to S$ be proper smooth of relative dimension $d$, and $i:T\hookrightarrow C$ be a codimension-$d$ closed immersion of smooth schemes over $S$ with open complement $j:U:=C-T\hookrightarrow C$. Write $g= f\circ i$. Let $E\in \DM(S)$ be any $H\Z_S$-module spectrum. Then the composite in $\A^1$-invariant motivic cohomology
    \[
    H^p(U,j_*f^*E(q))\to H^{p-2d+1}(T,g^*E(q-d))\to  H^{p-2d+1}(S,E(q-d))
    \]
    is the zero map, where the two maps come from the localization sequence, respectively the trace map for the finite morphism $g:T\to S$ via the composite in $\DM(S)$
    \[
     g_*g^* E(q-d)=g_!g^!E(q-d) \to E(q-d)
    \]
    coming from the counit of the adjunction, where here we have used relative dimension-$0$ purity and the identification $g_*=g_!$ for $g$ proper.
\end{theorem}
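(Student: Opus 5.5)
The plan is to mimic the proof of Theorem \ref{thm:naive}: rewrite every group in the statement as a hom-group in $\DM(C)$ or $\DM(S)$ out of the unit, so that the two arrows become consecutive maps of objects. Put $F:=f^*E(q)$. By adjunction, $H^p(U,j_*F)=\Hom_{\DM(C)}(\Z(0)_C, j_*j^*F[p])$; here I read $j_*f^*E$ as the sheaf on $C$ whose cohomology computes that of $U$. The first arrow is then post-composition with the connecting map $\partial_T\colon j_*j^*F\to i_*i^!F[1]$ from the localization triangle for $(i,j)$. Purity for the smooth closed pair $T\subset C$ of codimension $d$ gives
\[
i^!F\simeq i^*F(-d)[-2d]=g^*E(q-d)[-2d],
\]
so $\Hom(\Z(0)_C, i_*i^!F[p+1])\cong H^{p-2d+1}(T,g^*E(q-d))$, which matches the middle group.

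Next I would push everything down to $S$ using $f_*$, and identify the trace with a map that factors through $f_*$ of the localization triangle. Since $f$ is proper, $f_*=f_!$. Relative purity gives $f^!E\simeq f^*E(d)[2d]$, hence $F\simeq f^!E(q-d)[-2d]$. Combining this with $i^!F\simeq g^*E(q-d)[-2d]$ yields
\[
i^!f^!E(q-d)[-2d]\simeq g^!E(q-d)[-2d],
\]
and this should be the dimension-$0$ purity identification used in the statement. The trace $g_!g^!\to \mathrm{id}$ is then the composite
\[
f_!\,i_!\,i^!f^!\to f_!f^!\to \mathrm{id},
\]
because the counit of a composite adjunction is the composite of the counits. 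The first factor is $f_!$ applied to the counit $i_*i^!(f^!E)\to f^!E$, and that counit is exactly the first arrow of the localization triangle for $f^!E(q-d)[-2d]\simeq F$. So the map $H^{p-2d+1}(T,\ldots)\to H^{p-2d+1}(S,\ldots)$ factors through $\Hom(\Z(0)_S, f_*F[p+1]\,\text{twisted})$, i.e. through $i_*i^!F[1]\to F[1]$. The composite $j_*j^*F\xrightarrow{\partial}i_*i^!F[1]\to F[1]$ is zero, since these are consecutive arrows of a distinguished triangle. Hence the whole composite vanishes.

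The main obstacle is the compatibility in the second paragraph. The trace is defined using the purity isomorphism for $g$ of relative dimension $0$, and this must agree with the composite of the purity isomorphisms for $f$ and for $i$, up to a sign or unit that would not affect vanishing. Relatedly, the identification $i^!F\simeq i^*F(-d)[-2d]$ must be the one used to define the middle group. I would first try to sidestep this by defining the middle group and the trace directly through $i^!f^!$, where the statement only needs some isomorphism. Failing that, I would invoke the compatibility of Thom/purity transformations with composition from \cite{Hoy}, together with the orientation of $H\Z$ that trivializes the Thom spectra. With any isomorphism the image is zero anyway, so vanishing holds regardless of which identification is chosen.
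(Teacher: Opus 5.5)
Your argument is the same as the paper's. Both identify the middle group via purity for $T\subset C$, factor the trace for $g$ as $f_!$ applied to the counit $i_*i^!\to\mathrm{id}$ (the Gysin map into $H^{p+1}(C,E(q))$) followed by the counit for $f$, and conclude because consecutive arrows of the localization triangle compose to zero.

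The paper tacitly assumes the purity-compatibility point you flag, and your fix via composition-compatibility of purity isomorphisms for the oriented theory $H\Z$ is the right one. However, your closing claim that any identification works is too strong. A mismatch by a non-constant unit, e.g.\ different signs on different components of $T$, would spoil the vanishing, so the identifications must agree up to a global unit.
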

\begin{proof}
    From localization and the two purities, we have a triangle in $\DM(C)$
    \[
    i_*g^*E(-d)[-2d] \to f^* E \to j_* j^* f^* E \to i_*g^*E(-d)[-2d+1]
    \]
    to which we may apply $\Hom_{\DM(C)}( \Z(0)_C,- \otimes \Z(q)_C)$ to obtain the standard localization sequence with supports, including the segment 
    \[
    \ldots \to H^p(U,j_*f^*E(q))\to H^{p-2d+1}(T,g^*E(q-d))\to  H^{p+1}(C,E(q)) \to \ldots
    \]
    Writing the closed immersion map on motivic cohomology from the localization triangle on $S$ as
    \[
        g_*g^* E(-d)[-2d] \to f_*f^*E
    \]
    we see that the trace map for $g$ factors through this morphism via the counit for the proper morphism $f$ (with purity twist, as in the theorem statement but in dimension $d$ rather than $0$), so we are done as before.
\end{proof}
    
We now describe the relationship of this result to the Theorems of the introduction, which is essentially that they coincide for smooth schemes over Dedekind bases: as mentioned, when $E=H\Z_S$, and all schemes are absolutely smooth, this construction agrees with the Bloch--Levine construction from the introduction: Spitzweck constructs \cite[\S4.4]{Sp} an oriented $E_\infty$-ring spectrum for $X$ smooth over a Dedekind domain
\[
M\Z_X \in \mathrm{Sp}_\T(\Ch(\Sh(\Sm_{X,Zar}, \Z))),
\]
for which one has an isomorphism of oriented $E_\infty$-ring spectra $M\Z_X \cong H\Z_X$ \cite{BH} compatible with base change. For the purposes of calculation, Spitzweck constructs also a ``naive spectrum'' $\mathcal{M}_X\in \SH(X)$ and a functorial isomorphism
\[
\mu_X:\mathcal{M}_X\xrightarrow{\sim} M\Z_X.
\]
The spectrum $\mathcal{M}_X$, not being defined via a rectified symmetric $\T$-spectrum, cannot see the $E_\infty$ structure, but has the advantage of being an $\Omega_\T$-spectrum in the sense that for all $r$, there are canonical isomorphisms
\[
M_X(r-1)[-1] \cong \underline{\Hom}(\T_X, M_X(r))
\]
inside $\SH(X)$, where on the right-hand side we have the mapping spectrum \cite[Proposition 5.29]{Sp}, and $M_X(r)[r]\in D(\Sh(\Sm_{X,Zar},\Z))$ is the ``level r'' part of $\mathcal{M}_X$ (i.e. the space $E_r$ in the tower-of-spaces description of spectra).\footnote{Spitzweck does not distinguish notation for the spectra $\mathcal{M}_X$ and the level parts we denote $M_X$; we find it helpful for our understanding to do so.}\footnote{Here, one applies the Dold--Kan functor and Nisnevich sheafifies to view these objects inside $\SH(X)$; Spitzweck frequently implicitly views complexes of big site Zariski sheaves inside the category of motivic spaces in this way and we follow his lead.} This allows Spitzweck to calculate the comparison:

\begin{proposition} \label{prop:comparison}
    Let $X$ be smooth over a Dedekind domain $B$, writing $f:X\to Y:=\mathrm{Spec }\,B$ for the structure map. The comparison $\mu$ gives rise to functorial isomorphisms
    \[
        \pi_0\Hom_{\SH(X)}(1, \Sigma^{i,n}M\Z_X) \cong \CH^{n}(X, 2n-i)
    \]
\end{proposition}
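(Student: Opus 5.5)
The plan is to compute the left-hand side through Spitzweck's naive $\Omega_\T$-spectrum $\mathcal{M}_X$, using the isomorphism $\mu_X$ together with $M\Z_X\cong H\Z_X$. Thus it suffices to identify $\pi_0\Hom_{\SH(X)}(1,\Sigma^{i,n}\mathcal{M}_X)$ with $\CH^n(X,2n-i)$. Because $\mathcal{M}_X$ is an $\Omega_\T$-spectrum, the mapping group from the unit into a $(S^1)^{\wedge i}\wedge\T^{\wedge n}$-suspension can be computed at a single level. We first use the canonical isomorphisms $M_X(r-1)[-1]\cong\underline{\Hom}(\T_X,M_X(r))$ to shift the Tate twist to level $n$. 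This identifies $\Hom_{\SH(X)}(1,\Sigma^{i,n}\mathcal{M}_X)$ with morphisms in the unstable pointed motivic category from $X_+$ into the infinite loop object attached to $M_X(n)[i]$. By the Dold--Kan/Nisnevich-sheafification conventions of the preceding footnote, this becomes the hypercohomology $\mathbb{H}^{i}_{\mathrm{Nis}}(X, M_X(n))$.

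The next step is to identify $M_X(n)$, restricted to the small site of $X$, with Levine's cycle complex $\Z(n)^{BL}_X[-2n]$ up to shift; here $M_X(n)$ is Spitzweck's complex built from the cycle complexes $z^n(-\times\Delta^\bullet)$ over the Dedekind base. With this in hand, $\mathbb{H}^i(X,M_X(n))\cong\mathbb{H}^{i-2n}(X,\Z(n)^{BL})=\CH^n(X,2n-i)$ by the definition of higher Chow groups recalled in the introduction. This step also needs Zariski and Nisnevich hypercohomology to agree for these complexes. That agreement follows from Levine's localization theorem (the triangle quoted from \cite[Theorem 1.7]{L2}), which gives Mayer--Vietoris, and Nisnevich excision, for the Bloch--Levine complexes. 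Functoriality in $X$ for smooth pullback is inherited from the functoriality of $\mu$ and of the cycle complexes.

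The main obstacle is showing that the unstable mapping group really is Nisnevich hypercohomology of the level-$n$ complex. Two facts are needed. First, the levels $M_X(n)$ must be $\A^1$-local and Nisnevich-fibrant up to quasi-isomorphism. $\A^1$-invariance should follow from homotopy invariance of higher Chow groups over a Dedekind base, which Levine deduces from localization. Second, the stable mapping group must stabilize at level $n$, which is exactly the $\Omega_\T$-property \cite[Proposition 5.29]{Sp}. My plan is to invoke Spitzweck's results for both facts, as in \cite[\S4--5]{Sp}, rather than reprove them. The remaining work is the bookkeeping of the bidegree shift $\Sigma^{i,n}\leftrightarrow[2n-i]$ under $\T\simeq S^{2,1}$.
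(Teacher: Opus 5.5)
Your proposal is correct and follows essentially the same chain of identifications as the paper: the comparison $\mu$, the $\Omega_\T$-property to pass to the level-$n$ complex, and derived Yoneda to get Nisnevich hypercohomology. It then uses Spitzweck's identification of that level with the Bloch--Levine cycle complex and the Zariski/Nisnevich comparison, which the paper cites from Geisser rather than rederiving from localization as you sketch. The only cosmetic difference is that the paper first moves the computation to $\SH(Y)$ via the $f_\#\dashv f^*$ adjunction and base change of $M\Z$, i.e.\ it computes $\Hom_{\SH(Y)}(\Sigma^\infty X_+,\Sigma^{i,n}M\Z_Y)$, whereas you work with $\mathcal{M}_X$ directly over $X$.
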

\begin{proof}
    This is left as an exercise by Spitzweck, so we give a few details. If $f:X\to Y$ is a smooth map, then the pullback of sheaves induces a functor $f^*:\SH(Y)\to \SH(X)$, which has a \emph{left} adjoint $f_\#:\SH(X) \to \SH(Y)$ defined on the level of sheaves by left Kan extension along the obvious forgetful functor $\Sm_X\to \Sm_Y$. It follows then almost tautologically that the image of the unit $f_{\#}1_{\SH(X)}$ may be identified with the suspension spectrum $\Sigma^\infty X_+\in \SH(Y)$: i.e., the spectrum given in level $n$ by $\Z[X]_{Zar} \wedge \T_Y^{\wedge n}$ with the tautological structure maps. Then we may compute:
    \begin{align}
        \Hom_{\SH(X)}(1, \Sigma^{i,n}M\Z_X) &\cong \Hom_{\SH(Y)}(\Sigma^\infty X_+, \Sigma^{i,n}M\Z_Y) 
        \\&\cong \Hom_{\SH(Y)}(\Sigma^\infty X_+, \Sigma^{i,n} \mathcal{M}_Y(n)[i]) \\&\cong \Hom_{D(\Sh(\Sm_{Y,Zar},\Z))}( \Z[X_+]_{Zar},  M_Y(n)[i])\\& \cong \mathbb{H}^i_{Nis}(X,M_Y(n)|_{X_{Nis}}) \\&\cong \mathbb{H}^i_{Zar}(X,\Z(n)_X^{BL})
    \end{align}
    which is the higher Chow group. Here, the identification (3) is the adjunction between $f_\#$ and $f^*$ together with base change of $M\Z$, (4) is the identification $\mu_X$, (5) is the $\Omega_\T$-spectrum property of $\mathcal{M}$, (6) is the derived Yoneda lemma, and (7) is \cite[Corollary 5.20]{Sp} together with the calculation of Geisser \cite[Proposition 3.6]{G} that Nisnevich and Zariski hypercohomologies of the Bloch cycle complexes compute the same groups.
\end{proof}

Spitzweck records (\cite[Theorem 3.1]{Sp}) that Levine's localization sequences assemble into a localization sequence for the spectra $\mathcal{M}_X$, whose maps one may check essentially tautologically are identified under $\mu$ with the localization sequence for $M\Z_X$.

\section{Bykovskii relations among Beilinson--Kato elements}

\subsection{Elementary formulation}

The main result of \cite{BPPS} yields certain relations between cup products in Milnor $K$-theory of modular units, following an idea of Goncharov.\footnote{This followed earlier work of Brunault \cite{Bru} showing such relations at the level of \'{e}tale cohomology; related results were also proven in \cite{FK} and \cite{SV}.} These results were stated with $\Q$-coefficients and ``at the generic point'' on complex-analytic models of elliptic curves; for arithmetic applications, e.g. to the Sharifi conjectures (cf. \cite{SV}, \cite{LSSW}), one prefers integrality both in the coefficients and the base. Using the machine for producing motivic relations discussed in the previous sections, we prove such a refinement. 

Let $\pi:E\to S$ be any elliptic scheme, and let $N>2$ be an integer such that $E[N]$ is a constant group scheme over $S$; write $U=E-E[N]$ for the complement.\footnote{We treat here only the case where $S$ is regular; if this fails, one can still obtain analogous results by replacing all unit groups with the groups of units on semi-normalizations as a consequence of cdh-sheafification from the formalism of the previous section.}

The following is an algebraic argument substituting for the complex-analytic construction of the theta elements ${}_N\Theta_{\mathbf{a}}(u,\tau)^{12}/{}_N\Theta_{\mathbf{x}}(u,\tau)^{12}$ in loc. cit.: 

\begin{proposition}\label{prop:integral-theta}
Let $D$ be a degree-zero divisor supported on $E[N]$, disjoint from zero. Write 
\[
d_N:=\left(\gcd_{a\equiv 1\pmod{N}} a^2-1\right)
\]
Then there is a
distinguished element $\theta_{D}\in H^1(U,\Z(1)) =\mathcal{O}(U)^\times$ with divisor $d_N\cdot N\cdot D$ characterized by its invariance under $[a]_*$ for every integer $a\equiv 1\pmod{N}$. 
\end{proposition}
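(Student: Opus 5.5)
Here is the plan, which follows Kato's construction of the theta functions ${}_c\theta$. Four steps: show the target divisor is principal, build an invariant unit by a norm-type product, check the exponent $d_N$, and prove uniqueness.

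\textbf{Step 1 (principality).} First I show that $N\cdot D$ is the divisor of a unit on $U$, at least locally on $S$ up to units from the base. Since $D$ has degree zero, the obstruction to $D$ being principal fiberwise is its sum $\sigma(D)\in E[N](S)$ under the group law. The divisor $N\cdot D$ has sum $N\sigma(D)=0$, so by the theorem of the abelian scheme $\mathcal{O}_E(N\cdot D)$ is pulled back from a line bundle on $S$. Hence, Zariski locally on $S$, there is a unit $h$ on $U$ with $\mathrm{div}(h)=N\cdot D$. This $h$ is unique up to $\mathcal{O}(S)^\times$, and these units are what must be killed.

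\textbf{Step 2 (construction).} For each integer $a\equiv 1\pmod N$, multiplication $[a]$ fixes $E[N]$ pointwise, so $[a]$ maps $U' := E-E[aN]$ into $U$ and restricts to the identity on $E[N]$. The norm $[a]_*$ is defined on $\mathcal{O}(U)^\times$ via the finite flat map $[a]$ restricted to $[a]^{-1}U\subset U$. The $[a]$-torsion points away from $E[N]$ lie in $U$ as long as we restrict to $[a]^{-1}(U)$, so that $[a]_*: \mathcal{O}([a]^{-1}U)^\times\to\mathcal{O}(U)^\times$ makes sense. I then compare $[a]_*h$ with $h$: both have divisor $N\cdot D$ on $U$, since $[a]_*[a]^*$ multiplies by $\deg[a]=a^2$ while the pushforward of divisors fixes points of $E[N]$. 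For the degree-zero divisor $N D$, pushing forward the restriction recovers $N D$ up to the comparison $[a]^*(ND)$ versus $ND$ on $U$. So $[a]_*h/h$ is a unit constant along fibers, i.e.\ it lies in $\mathcal{O}(S)^\times$.

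Then I normalize, as Kato does. Set $\theta_D := h^{a^2}\big/[a]_*h$, or more cleanly choose the unique normalization in which $[a]_*\theta=\theta$. Applying $[a]_*$ to $h\cdot u$ for $u\in \mathcal{O}(S)^\times$ gives $[a]_*h\cdot u^{a^2}$. Invariance therefore forces $u^{a^2-1}=h/[a]_*h$, which can only be solved after raising everything to the power $a^2-1$. This yields an invariant $h^{a^2-1}$-type element with divisor $(a^2-1)N D$.

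\textbf{Step 3 (exponent $d_N$).} Combining these elements over all $a\equiv1\pmod N$ through a Bezout combination of the exponents $a^2-1$ gives exponent $d_N$. This also needs commutativity $[a]_*[b]_*=[ab]_*$, so that the elements are invariant for all $a$ simultaneously. Because the construction is canonical, it glues from the local patches on $S$ to a global element on $U$.

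\textbf{Step 4 (uniqueness).} Two such elements differ by a unit $u$ of $S$ satisfying $u^{a^2}=u$ for all $a$. In particular $u^{a^2-1}=1$ for all such $a$, and hence $u^{d_N}=1$. To rule out torsion I would fix the remaining ambiguity by a normalization at the cusps or at infinity. Failing that, I would take the characterization to mean uniqueness up to such torsion, which is killed after the further powers used later.

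\textbf{Main obstacle.} I expect the hardest step to be the precise norm bookkeeping: computing $[a]_*$ on divisors restricted to $U$ (where $[a]^{-1}U\neq U$), and handling $\mathcal{O}(S)^\times$ torsion in order to get genuine uniqueness.
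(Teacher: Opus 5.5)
Your plan is essentially the paper's proof. $ND$ is principal because $N\sigma(D)=0$, and the projectors $[a]_*-a^2$ (multiplicatively, your $h^{a^2}/[a]_*h$ up to inversion) kill the $\mathcal{O}(S)^\times$-ambiguity while scaling $D$ by $1-a^2$, so a Bezout combination over $a\equiv 1 \pmod N$ gives an invariant unit with divisor $d_N N D$. The only variation is that you produce $h$ Zariski-locally and glue via the $h$-independence of $h^{a^2}/[a]_*h$, whereas the paper gets $h$ globally from $e^*\mathcal{O}_E(ND)\cong\mathcal{O}_S$, using that $D$ avoids $0$ (a hypothesis your local argument does not need).

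Your Step 4 caution is justified and matches what the paper actually proves. Invariance only determines $\theta_D$ up to $\mu_{d_N}(S)$, and this is a genuine ambiguity: with $E[N]$ constant, the Weil pairing gives a primitive $N$th root of unity $\zeta\in\mathcal{O}(S)^\times$, $N\mid d_N$, and $\zeta\theta_D$ is again invariant with the same divisor. So drop the cusp normalization, which is unavailable over a general $S$, and read ``distinguished'' as ``canonically produced by the projector construction with fixed Bezout coefficients''.
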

\begin{proof}
The unit-divisor exact sequence (or the localization sequence more generally) gives
\[
\mathcal{O}(S)^\times \cong \mathcal{O}(E)^\times \to  \mathcal{O}(E-E[N])^{\times} \to \Z\{E[N]\} \to \Pic(E)(S)
\]
The image of the class of $D$ in the latter group is $N$-torsion after passing to its image in the \emph{relative} Picard group $\Pic_S(E)(S)$ since the cycle class map identifies its identity component with $E$. Thus, $\mathcal{O}_E(ND)$ is the pullback of a bundle on $S$: Writing $e:S\to E$ for the identity section, we have
$e^*\mathcal O_E(ND)\cong\mathcal O_S$, so we conclude that the divisor $\mathrm{lcm}(12,N)D$ is absolutely principal on $E$, and that $N$ alone is sufficient if $D$ is disjoint from zero. To kill the lift ambiguity in $\mathcal{O}(S)^\times$, we observe that each projector $[a]_*-a^2$ for integers $a\equiv 1\pmod{N}$ annihilates this group but acts on $D$ by the scalar $1-a^2$. The gcd of these numbers as $a$ varies is the other term in $d_N$.
\end{proof}

The uniqueness statement gives standard compatibilities between the various $\theta_D$ like multiplicativity in $D$, transformation under isogenies, etc.

\begin{remark} \label{rem:12}
     The disjoint-from-zero bound $d_N \cdot N$ is always an improvement on the $12N^2$ of \cite{BPPS}: one may compute that it is always either $N^2$, $2N^2$, or $4N^2$. We remark that if $D$ is not disjoint from zero, $e^*\mathcal O_E(0)\simeq\omega^{-1}$ for $\omega$ the Hodge bundle on $S$; the $12$th power of the Hodge bundle is famously trivial on the moduli stack of elliptic curves, so we need to multiply by $\mathrm{lcm}(12,N)$ rather than just $N$. Then for these theta functions, the overlapping-zero denominator may sometimes be $24N^2$, so this is not always an improvement. We are somewhat sub-optimal in the remainder of this article and even use $12N d_N$ which may even be $48N^2$; presumably these bounds at $2$ and $3$ could be further improved by closer examination.
\end{remark}

For nonzero $\mathbf{u},\mathbf{v}\in E[N](S)$, define the Siegel unit
\[
G_{\mathbf{u},\mathbf{v}}
:=
e^*\theta_{[\mathbf{v}]-[\mathbf{u}]}\in\mathcal O(S)^\times
\]
where $e$ is the zero section. Then one may compute $G_{\mathbf{u},\mathbf{v}}= 
(\frac{g_{\mathbf u}}{g_{\mathbf v}})^{Nd_N}$ for the classical complex-analytic Siegel unit formula (e.g. as written in \cite[Definition 1.1.1]{BPPS}). We define now the homogeneous formula, for pairwise distinct $\mathbf{a},\mathbf{b},\mathbf{c}$,
\[
\Psi(\mathbf{a}:\mathbf{b}:\mathbf{c}):=\{G_{\mathbf{a-b},\mathbf{b-c}},G_{\mathbf{c-a},\mathbf{b-c}}\}.
\]
Morally, this is
\[(Nd_N)^2(
\{g_{\mathbf a-\mathbf b},
g_{\mathbf c-\mathbf a}\}
+
\{g_{\mathbf c-\mathbf a},
g_{\mathbf b-\mathbf c}\}
+
\{g_{\mathbf b-\mathbf c},
g_{\mathbf a-\mathbf b}\})\]
though the indicated Siegel units are not actually units. However, the construction in \cite{BPPS} shows that they are units once raised to the power of $12N^2$ (so $12^2 N^4$ for the whole expression).

\begin{lemma} \label{lem:321}
For any
$\mathbf a,\mathbf b,\mathbf c,\mathbf x\in E[N](S)$, we have
\[
Nd_N
(
\Psi(\mathbf a:\mathbf b:\mathbf c)
-\Psi(\mathbf a:\mathbf b:\mathbf x)
-\Psi(\mathbf a:\mathbf x:\mathbf c)
-\Psi(\mathbf x:\mathbf b:\mathbf c))
=0.
\]
Here the terms involving a non-pairwise-distinct triple are interpreted as zero.
\end{lemma}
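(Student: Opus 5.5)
The plan is to obtain the relation as an instance of Theorem \ref{thm:naive}, equivalently Theorem \ref{thm:main} with $E=H\Z_S$. We work with $C=E$, relative dimension $d=1$, and $T$ the finite étale subscheme $\{\mathbf a,\mathbf b,\mathbf c,\mathbf x\}\subset E[N]$, a union of sections. Each component of $T$ maps isomorphically to $S$, so $(f|_{T_i})_*$ is the identity on $H^2(S,\Z(2))$. Reciprocity then says that for any class $\varphi\in H^3(E-T,\Z(3))$ we have $\sum_{\mathbf p\in T}\partial_{\mathbf p}\varphi=0$ in $H^2(S,\Z(2))$.

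\emph{Choice of $\varphi$.} For $\mathbf p\in T-\{\mathbf x\}$, let $f_{\mathbf p}:=\tau_{\mathbf x}^*\theta_{[\mathbf p-\mathbf x]-[?]}$ be a suitable translate of a theta element from Proposition \ref{prop:integral-theta}, chosen so that its divisor is $Nd_N([\mathbf p]-[\mathbf x])$ up to correction terms. We take $\varphi:=\{f_{\mathbf a},f_{\mathbf b},f_{\mathbf c}\}$, following Goncharov's element. Before computing residues, we record two facts from the uniqueness clause of Proposition \ref{prop:integral-theta}, namely invariance under $[a]_*$ for $a\equiv1\pmod N$:
\begin{enumerate}
\item \emph{Translation compatibility.} $\tau_{\mathbf t}^*\theta_D$ and $\theta_{D+\mathbf t}$ agree, possibly only after multiplying by a factor dividing $Nd_N$, because the characterizing projectors commute with translation by $N$-torsion only up to the $\mathcal O(S)^\times$ ambiguity.
\item \emph{Identification of values.} The value of such a translate at a torsion section $\mathbf s$ is $e^*\tau_{\mathbf s}^*(\cdots)$, which is a Siegel unit $G_{\mathbf u,\mathbf v}$ with $\mathbf u,\mathbf v$ differences of $\mathbf a,\mathbf b,\mathbf c,\mathbf x$.
\end{enumerate}
Uniqueness is also what lets us swap $e^*\theta$ for the normalized $G$ without leftover constants. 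The resulting loss of integrality is absorbed by the overall factor $Nd_N$ in the statement.

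\emph{Residues.} At a section $\mathbf p\in\{\mathbf a,\mathbf b,\mathbf c\}$, only $f_{\mathbf p}$ vanishes, to order $Nd_N$. By \eqref{eq:cup} the residue is therefore $\pm Nd_N\{f_{\mathbf q}(\mathbf p),f_{\mathbf r}(\mathbf p)\}$, where $\{\mathbf p,\mathbf q,\mathbf r\}=\{\mathbf a,\mathbf b,\mathbf c\}$. After the identification of values and bilinearity, this should be $\pm Nd_N\,\Psi$ of the triple with $\mathbf x$ replacing $\mathbf p$. These three residues give the terms $\Psi(\mathbf x:\mathbf b:\mathbf c)$, $\Psi(\mathbf a:\mathbf x:\mathbf c)$ and $\Psi(\mathbf a:\mathbf b:\mathbf x)$.

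\emph{Main obstacle: the residue at $\mathbf x$.} All three functions have a pole at $\mathbf x$, so \eqref{eq:cup} cannot be applied directly. First we rewrite
\[
\varphi=\{f_{\mathbf a},f_{\mathbf b}/f_{\mathbf a},f_{\mathbf c}/f_{\mathbf a}\}.
\]
The second and third entries are, by translation compatibility, theta elements with divisors $Nd_N([\mathbf b]-[\mathbf a])$ and $Nd_N([\mathbf c]-[\mathbf a])$, which are units near $\mathbf x$. The residue at $\mathbf x$ is then $-Nd_N\{(f_{\mathbf b}/f_{\mathbf a})(\mathbf x),(f_{\mathbf c}/f_{\mathbf a})(\mathbf x)\}$. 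Using the homogeneity built into the definition $\Psi(\mathbf a:\mathbf b:\mathbf c)=\{G_{\mathbf a-\mathbf b,\mathbf b-\mathbf c},G_{\mathbf c-\mathbf a,\mathbf b-\mathbf c}\}$, this should be exactly $-Nd_N\Psi(\mathbf a:\mathbf b:\mathbf c)$ after translating by $-\mathbf x$.

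The hard part is the bookkeeping at $\mathbf x$. We must check that the leading-coefficient issue, the $\omega^{-1}$ and factor-$12$ problem of Remark \ref{rem:12}, never arises; this holds because every function we evaluate is a genuine unit at the evaluation point with divisor disjoint from it. We must also check that the signs and the inversions $G_{\mathbf u,\mathbf v}^{-1}=G_{\mathbf v,\mathbf u}$ match the convention for $\Psi$. If $\varphi$ turns out not to be integral as written, the first fallback is to use the two-function version $\{f_{\mathbf b}/f_{\mathbf a},f_{\mathbf c}/f_{\mathbf a}\}$ on $E-T$, cupped appropriately.

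Degenerate cases, where the triple is not pairwise distinct, are handled by dropping $\mathbf x$ or the repeated point from $T$. Then $\varphi$ has fewer residues, consistent with those terms being interpreted as zero.
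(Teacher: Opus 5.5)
Your overall strategy matches the paper's: take Goncharov's element $\{\theta_{[\mathbf a]-[\mathbf x]},\theta_{[\mathbf b]-[\mathbf x]},\theta_{[\mathbf c]-[\mathbf x]}\}$, apply Theorem \ref{thm:naive}, and compute residues. The gap is in the residue bookkeeping, which is the actual content of the lemma: your per-point identification is false.

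With $f_{\mathbf p}=\theta_{[\mathbf p]-[\mathbf x]}$, translation compatibility gives $\mathbf a^*f_{\mathbf b}=e^*\theta_{[\mathbf b-\mathbf a]-[\mathbf x-\mathbf a]}=G_{\mathbf x-\mathbf a,\mathbf b-\mathbf a}$. So the residue at $\mathbf a$ is $Nd_N\{G_{\mathbf x-\mathbf a,\mathbf b-\mathbf a},G_{\mathbf x-\mathbf a,\mathbf c-\mathbf a}\}$. This is built from $\mathbf x-\mathbf a,\mathbf b-\mathbf a,\mathbf c-\mathbf a$, whereas $\Psi(\mathbf x:\mathbf b:\mathbf c)$ does not involve $\mathbf a$ at all. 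Likewise, the residue at $\mathbf x$ is $-Nd_N\{G_{\mathbf a-\mathbf x,\mathbf b-\mathbf x},G_{\mathbf a-\mathbf x,\mathbf c-\mathbf x}\}$, while $\Psi(\mathbf a:\mathbf b:\mathbf c)$ involves only $\mathbf a-\mathbf b,\mathbf b-\mathbf c,\mathbf c-\mathbf a$. Homogeneity of $\Psi$ does not bridge this, because $G_{\mathbf u-\mathbf t,\mathbf v-\mathbf t}$ is the value of $\theta_{[\mathbf v]-[\mathbf u]}$ at $\mathbf t$, not at $0$.

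Expanding with the paper's moral formula $G_{\mathbf u,\mathbf v}\approx(g_{\mathbf u}/g_{\mathbf v})^{Nd_N}$ and $g_{-\mathbf u}=g_{\mathbf u}$, the residue at $\mathbf a$ is
\[
(Nd_N)^3\bigl(\{g_{\mathbf a-\mathbf b},g_{\mathbf c-\mathbf a}\}+\{g_{\mathbf c-\mathbf a},g_{\mathbf x-\mathbf a}\}+\{g_{\mathbf x-\mathbf a},g_{\mathbf a-\mathbf b}\}\bigr).
\]
This is $Nd_N$ times the summands at the vertex $\mathbf a$ (the term pairing the two edges through $\mathbf a$) of $\Psi(\mathbf a:\mathbf b:\mathbf c)-\Psi(\mathbf a:\mathbf b:\mathbf x)-\Psi(\mathbf a:\mathbf x:\mathbf c)$. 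These are the three faces through $\mathbf a$, not the face opposite it, and the same holds at $\mathbf b,\mathbf c,\mathbf x$. Only the sum of all four residues reassembles into $Nd_N(\Psi(\mathbf a:\mathbf b:\mathbf c)-\Psi(\mathbf a:\mathbf b:\mathbf x)-\Psi(\mathbf a:\mathbf x:\mathbf c)-\Psi(\mathbf x:\mathbf b:\mathbf c))$. Each $\Psi$ collects one term from each of its three vertices, using multiplicativity $G_{\mathbf u,\mathbf w}=G_{\mathbf u,\mathbf v}G_{\mathbf v,\mathbf w}$ and evenness $G_{-\mathbf u,-\mathbf v}=G_{\mathbf u,\mathbf v}$. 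That regrouping is the computation the paper imports from \cite[Lemma 3.2.1]{BPPS]}. Note also that your rewriting $\{f_{\mathbf a},f_{\mathbf b},f_{\mathbf c}\}=\{f_{\mathbf a},f_{\mathbf b}/f_{\mathbf a},f_{\mathbf c}/f_{\mathbf a}\}$, and these expansions, hold only modulo symbols involving $-1$, coming from $\{f,f\}=\{f,-1\}$.

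There are also smaller problems. Your $f_{\mathbf p}$ is never pinned down (the ``$[?]$'' and the ``correction terms''). Extra divisor components would create residues you have not accounted for. The natural filling $[?]=[0]$ puts you in the case of a divisor through $0$, which Proposition \ref{prop:integral-theta} only handles with $\mathrm{lcm}(12,N)$. Your remark that evaluation points avoid the divisors concerns evaluation, not the construction of $f_{\mathbf p}$. The paper simply uses $\theta_{[\mathbf p]-[\mathbf x]}$ when all four sections are nonzero, and otherwise translates all four by a common $\mathbf t$, which is harmless since the identity involves only differences.

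Finally, there is no spare factor to ``absorb'' a loss in translation compatibility. $Nd_N$ is exactly the vanishing order of $f_{\mathbf p}$, i.e.\ the multiplicity in the tame symbol. No loss occurs in any case: for $a\equiv1\pmod N$ and $\mathbf t\in E[N](S)$ one has $[a]\circ\tau_{\mathbf t}=\tau_{\mathbf t}\circ[a]$. Hence uniqueness gives $\tau_{\mathbf t}^*\theta_D=\theta_{\tau_{\mathbf t}^*D}$ exactly, whenever both sides are defined.
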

\begin{proof}
    For
    $\mathbf{a},\mathbf{b},\mathbf{c},\mathbf{x}$ all nonzero, let
    \[
    \Xi_{\mathbf{x}}(\mathbf{a},\mathbf{b},\mathbf{c})
    :=
    \{\theta_{[\mathbf{a}]-[\mathbf{x}}],
      \theta_{[\mathbf{b}]-[\mathbf{x}}],
      \theta_{[\mathbf{c}]-[\mathbf{x}}]\}
    \in H^3(U,\Z(3)).
    \]
    Then we may compute, exactly as in \cite[Lemma 3.2.1]{BPPS},
    \[
    (\pi|_{E[N]})_*
    \partial\Xi_{\mathbf{x}}(\mathbf{a},\mathbf{b},\mathbf{c})
    =
    Nd_N(
     \Psi(\mathbf{a}:\mathbf{b}:\mathbf{c})
    -\Psi(\mathbf{a}:\mathbf{b}:\mathbf{x})
    -\Psi(\mathbf{a}:\mathbf{x}:\mathbf{c})
    -\Psi(\mathbf{x}:\mathbf{b}:\mathbf{c})
    )
    \]
    and the result then follows by Theorem \ref{thm:naive}. In general, notice that the identity only involves ``homogeneous'' terms unaffected by the translation $\mathbf{a},\mathbf{b},\mathbf{c},\mathbf{x}\mapsto \mathbf{a}+\mathbf{t},\mathbf{b}+\mathbf{t},\mathbf{c}+\mathbf{t},\mathbf{x}+\mathbf{t}$, so we may simply pick such a $\mathbf{t}$ and apply the same argument for the translated sections.
\end{proof}

Then by Goncharov's finite-group trick applied to $\bigwedge^3 \Z[E[N](S)]$, exactly as in \cite[Proposition 3.2.3]{BPPS}, we deduce that for \emph{any} fixed $\mathbf{a},\mathbf{b},\mathbf{c}\in E[N](S)$, we have
\[
2\sum_{\mathbf{x}\in E[N]}\Psi(\mathbf{a}:\mathbf{b}:\mathbf{x})
=
2\sum_{\mathbf{x}\in E[N]}\Psi(\mathbf{a}:\mathbf{x}:\mathbf{c})
=
2\sum_{\mathbf{x}\in E[N]}\Psi(\mathbf{x}:\mathbf{b}:\mathbf{c})
=0.
\]
from which, summing the preceding lemma over $\mathbf{x}\in E[N](S)$ as in loc. cit., we deduce:
\begin{theorem} \label{thm:bpps}
Let $\mathbf{a},\mathbf{b},\mathbf{c}\in E[N](S)$ be pairwise distinct. Then
\[
2d_N N^3\,
\Psi(\mathbf{a}:\mathbf{b}:\mathbf{c})=0\in H^2(S,\Z(2)).
\]
\end{theorem}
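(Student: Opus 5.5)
The plan is to sum the four-term relation of Lemma \ref{lem:321} over all $\mathbf{x}\in E[N](S)$ and use the three vanishing sums displayed just before the theorem. Fix pairwise distinct $\mathbf{a},\mathbf{b},\mathbf{c}$. For each $\mathbf{x}$, Lemma \ref{lem:321} gives
\[
Nd_N\Psi(\mathbf a:\mathbf b:\mathbf c)=Nd_N\bigl(\Psi(\mathbf a:\mathbf b:\mathbf x)+\Psi(\mathbf a:\mathbf x:\mathbf c)+\Psi(\mathbf x:\mathbf b:\mathbf c)\bigr).
\]
When $\mathbf{x}\in\{\mathbf a,\mathbf b,\mathbf c\}$, the degenerate triples are interpreted as zero. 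For example, at $\mathbf{x}=\mathbf{c}$ the right side reduces to the single term $\Psi(\mathbf a:\mathbf b:\mathbf c)$, so the identity is trivially consistent. I will check that the lemma's convention makes it valid for every $\mathbf{x}$, including these three.

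Summing over the $N^2$ points $\mathbf{x}\in E[N](S)$, which is legitimate because $E[N]$ is constant of order $N^2$, gives
\[
N^2\cdot Nd_N\,\Psi(\mathbf a:\mathbf b:\mathbf c)=Nd_N\Bigl(\sum_{\mathbf x}\Psi(\mathbf a:\mathbf b:\mathbf x)+\sum_{\mathbf x}\Psi(\mathbf a:\mathbf x:\mathbf c)+\sum_{\mathbf x}\Psi(\mathbf x:\mathbf b:\mathbf c)\Bigr).
\]
Multiplying by $2$ and using the Goncharov-trick vanishing $2\sum_{\mathbf x}\Psi(\cdots)=0$ for each of the three sums, the right side vanishes. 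This yields $2d_NN^3\Psi(\mathbf a:\mathbf b:\mathbf c)=0$ in $H^2(S,\Z(2))$, which is the claim.

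The main point requiring care is the input from \cite[Proposition 3.2.3]{BPPS}: the vanishing of $2\sum_{\mathbf x}\Psi$ must hold integrally, with no hidden factor of $Nd_N$ lost. The cleanest route, which I would try first, is to note that the finite-group argument only uses antisymmetry and homogeneity of $\Psi$ as a function on triples in $\bigwedge^3\Z[E[N](S)]$. Antisymmetry holds up to $2$-torsion by the skew-commutativity of the symbol $\{-,-\}$ in $H^2(\Z(2))$, which is where the factor $2$ enters. Homogeneity holds because $\Psi$ depends only on differences.

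Given this, the Goncharov argument is pure combinatorics in the group ring. It produces the displayed sum identities possibly only after multiplying by $Nd_N$, but that factor is already present on both sides of the summed relation. So if needed I would run the whole argument with $Nd_N\Psi$ in place of $\Psi$, and the final constant $2d_NN^3$ is unchanged.
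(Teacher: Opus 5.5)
Your proof is correct and is the paper's own argument: sum Lemma~\ref{lem:321} over the $N^2$ sections $\mathbf{x}$, where the degenerate cases $\mathbf{x}\in\{\mathbf a,\mathbf b,\mathbf c\}$ hold trivially. Then multiply by $2$ and kill the three sums with the doubled Goncharov vanishing from \cite[Proposition 3.2.3]{BPPS}; as you say, the factor $Nd_N$ is harmless.

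One correction to your side remark on that input: for $\Sigma(\mathbf a,\mathbf b):=\sum_{\mathbf x}\Psi(\mathbf a:\mathbf b:\mathbf x)$, translation-invariance and antisymmetry alone only give $\Sigma(\mathbf a,\mathbf b)=-\Sigma(-\mathbf a,-\mathbf b)$, so the trick also needs invariance of $\Psi$ under simultaneous inversion. The factor $2$ then comes from the resulting identity $\Sigma=-\Sigma$, not from a $2$-torsion defect in antisymmetry; such a defect would instead cost a factor $4$.
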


We can also rephrase this in the more familiar form in terms of Siegel units (or rather their powers yielding actual units on the base). In the below statement, we lose some of the integrality of the previous statement; there are possible slight improvements at the primes $2$ and $3$ with some additional casework (which we ignore).

\begin{corollary}
Let
$\mathbf{u},\mathbf{v},\mathbf{w}\in E[N](S)-\{0\}$ satisfy $\mathbf{u}+\mathbf{v}+\mathbf{w}=0$.
Then
\[
2N^3d_N
\left(
 \{g_{\mathbf{u}}^{12N^2},g_{\mathbf{v}}^{12N^2}\}
+\{g_{\mathbf{v}}^{12N^2},g_{\mathbf{w}}^{12N^2}\}
+\{g_{\mathbf{w}}^{12N^2},g_{\mathbf{u}}^{12N^2}\}
\right)
=0 \in H^2(S,\Z(2)).
\]
\end{corollary}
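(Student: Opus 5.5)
The corollary should follow from Theorem \ref{thm:bpps} by choosing the triple $(\mathbf a,\mathbf b,\mathbf c)$ so that the three differences appearing in $\Psi$ are $\mathbf u,\mathbf v,\mathbf w$, and then rewriting the $G$'s in terms of the classical Siegel units. Concretely, take $\mathbf a=0$, $\mathbf b=-\mathbf u$ and $\mathbf c=\mathbf w$. Then $\mathbf a-\mathbf b=\mathbf u$, $\mathbf b-\mathbf c=-\mathbf u-\mathbf w=\mathbf v$ and $\mathbf c-\mathbf a=\mathbf w$. These three points are pairwise distinct precisely because $\mathbf u,\mathbf v,\mathbf w$ are all nonzero. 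Theorem \ref{thm:bpps} therefore gives
\[
2d_NN^3\{G_{\mathbf u,\mathbf v},G_{\mathbf w,\mathbf v}\}=0\in H^2(S,\Z(2)).
\]

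The second step expands this symbol bilinearly. By the computation $G_{\mathbf x,\mathbf y}=(g_{\mathbf x}/g_{\mathbf y})^{Nd_N}$, each $G$ is morally a ratio of Siegel units. The individual $g_{\mathbf x}$ are not units, so we pass to a common power $m$ for which they are: the $12N^2$ of \cite{BPPS}. Since $Nd_N$ divides $4N^2$ by Remark \ref{rem:12}, the exponent $12N^2$ is a multiple of $Nd_N$, and $g_{\mathbf x}^{12N^2}\in\mathcal O(S)^\times$. Setting $k=12N^2/(Nd_N)$, we have $G_{\mathbf u,\mathbf v}^{k}=g_{\mathbf u}^{12N^2}/g_{\mathbf v}^{12N^2}$ as actual units. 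We can then expand
\[
k^2\{G_{\mathbf u,\mathbf v},G_{\mathbf w,\mathbf v}\}=\{g_{\mathbf u}^{12N^2}/g_{\mathbf v}^{12N^2},\,g_{\mathbf w}^{12N^2}/g_{\mathbf v}^{12N^2}\}.
\]
Using bilinearity, antisymmetry, and $\{x,x\}=\{x,-1\}$, the right-hand side becomes
\[
\{g_{\mathbf u}^{12N^2},g_{\mathbf w}^{12N^2}\}+\{g_{\mathbf w}^{12N^2},g_{\mathbf v}^{12N^2}\}+\{g_{\mathbf v}^{12N^2},g_{\mathbf u}^{12N^2}\}+\{g_{\mathbf v}^{12N^2},-1\}.
\]
Antisymmetry identifies this cyclic sum with minus the one in the statement.

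The main obstacle is the identity $G^k=g_{\mathbf u}^{12N^2}/g_{\mathbf v}^{12N^2}$ and the stray $\{\cdot,-1\}$ term. There are three things to handle.

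First, the identity holds a priori only up to a unit from $S$, or a root of unity, since the $g$'s are defined analytically. I would fix this using the $[a]_*$-invariance characterization of Proposition \ref{prop:integral-theta}, or by declaring $g^{12N^2}$ to be $e^*$ of the appropriate $\theta$ constructed integrally. That gives an honest equality on a regular $S$.

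Second, the term $\{g_{\mathbf v}^{12N^2},-1\}$ has to be disposed of. Because $12N^2$ is even, we have $\{x^{12N^2},-1\}=12N^2\{x,-1\}$ formally, but this only makes sense once $x$ is itself a unit. Instead I would use $\{y,-1\}=\{y^{1/2},1\}=0$ whenever $y$ is a square of a unit, which holds here since $g^{12N^2}=(g^{6N^2})^2$ with $g^{6N^2}$ still a unit. If that fails, I would note that $2\{y,-1\}=0$ and absorb it into the factor of $2$.

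Third, the multiplier $k^2$ has to be tracked so that the final constant is $2N^3d_N$ rather than $2N^3d_Nk^2$. The symbol we must kill is $k^2$ times the one in Theorem \ref{thm:bpps}, so multiplying the theorem's identity by $k^2$ gives exactly the claimed vanishing with multiplier $2N^3d_N$. This is why the statement places the large powers inside the symbols rather than in the constant.
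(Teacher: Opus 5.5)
Your argument is correct and is the paper's intended (unwritten) derivation: specialize Theorem \ref{thm:bpps} at $(0,-\mathbf u,\mathbf w)$, pass to $k$-th powers via $G_{\mathbf u,\mathbf v}=(g_{\mathbf u}/g_{\mathbf v})^{Nd_N}$, and expand bilinearly, with the factor $k^2$ being exactly the integrality the paper says is lost. One correction: your first way of killing $\{g_{\mathbf v}^{12N^2},-1\}$ fails in general, because for odd $N$ the function $g_{\mathbf v}^{6N^2}$ changes sign under $\tau\mapsto\tau+N$ and so is not a unit on the full-level-$N$ modular curve; your fallback $2\{y,-1\}=0$, absorbed by the factor $2$ in $2N^3d_N$, is valid and is what actually finishes the proof.
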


As in loc. cit., this leads to modular symbols. Let $\mathrm{St}(\Q^n)$ be the usual Steinberg module of $\Q^n$ coming from the top reduced homology of the Tits building of $\GL_n(\Q)$; it is generated by apartment classes $[v_1,\ldots, v_n]$ coming from unimodular bases of $\Z^n$. Let $e_1,\ldots, e_n$ be the standard basis of $\Q^n$.
\begin{theorem} \label{thm:modnaive}
    Set $\mathfrak{g}_{\mathbf{x}}= g_{\mathbf{x}}^{12N^2}$. Then there is a $\GL_n(\Z)$-equivariant map 
    \[
    \mathrm{St}(\Q^n) \to \Hom_{\mathrm{Set}}(\Hom(\Z^n,E[N](S)), H^n(S,\Z(n)))
    \]
    sending 
    \[
        [v_1,\ldots, v_n]\mapsto (\phi\mapsto M \{ \mathfrak{g}_{\phi(v_1)}, \ldots, \mathfrak{g}_{\phi(v_n)}\})
    \]
    where we take the convention $\mathfrak{g}_{0}=1$ and $M=2N^3d_N$. Here, the $\GL_n(Z)$-action on the target is $\mu \mapsto (\phi \mapsto \mu(\phi \circ \gamma))$ for the usual left action of $\gamma\in \GL_n(\Z)$ on $\Z^n$. 
\end{theorem}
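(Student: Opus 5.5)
We will construct the map through the standard Bykovskii presentation of the Steinberg module. Recall (Bykovskii) that $\mathrm{St}(\Q^n)$ is presented, as a $\GL_n(\Z)$-module, by generators $[v_1,\ldots,v_n]$ indexed by ordered bases of $\Z^n$ (the image of the unimodular symbol), subject to three families of relations:
\begin{enumerate}[(i)]
    \item $[v_1,\ldots,v_n] = \mathrm{sgn}(\sigma)[v_{\sigma(1)},\ldots,v_{\sigma(n)}]$;
    \item $[-v_1,v_2,\ldots,v_n]=[v_1,\ldots,v_n]$;
    \item the Manin three-term relation $[v_1,v_2,\ldots]=[v_1+v_2,v_2,\ldots]+[v_1,v_1+v_2,\ldots]$.
\end{enumerate}
So it suffices to define the assignment $\mu([v_1,\ldots,v_n])(\phi):=M\{\mathfrak g_{\phi(v_1)},\ldots,\mathfrak g_{\phi(v_n)}\}$ on generators, check these three relations pointwise in $\phi$, and check equivariance. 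Equivariance is immediate from the formula: $\gamma[v_i]=[\gamma v_i]$ and $\phi(\gamma v_i)=(\phi\circ\gamma)(v_i)$, matching the stated action.

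Relation (i) follows from graded-commutativity of the cup product in $H^*(S,\Z(*))$: the symbols $\mathfrak g$ lie in degree $(1,1)$, so permuting them introduces the sign of the permutation. For (ii) we need $\mathfrak g_{-\mathbf x}=\mathfrak g_{\mathbf x}$. We would deduce this from the uniqueness characterization in Proposition \ref{prop:integral-theta}: $[-1]^*$ carries $\theta_D$ to a unit with divisor $d_N N[-1]^*D$ that is still invariant under all $[a]_*$, hence equals $\theta_{[-1]^*D}$. Pulling back along $e$ gives evenness of $G$, and hence of $g^{12N^2}$. If this only holds up to torsion at $2$, we would absorb it into the factor $2$ in $M$. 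The convention $\mathfrak g_0=1$ makes any symbol with a zero entry vanish, which is compatible with all relations, since a term with $\phi(v_i)=0$ gives $0$.

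The main step is (iii). By multilinearity of the cup product, factor out $\{\mathfrak g_{\phi(v_3)},\ldots,\mathfrak g_{\phi(v_n)}\}$ on the right. It then suffices to show, for $\mathbf u=\phi(v_1)$ and $\mathbf v=\phi(v_2)$, that
\[
M\bigl(\{\mathfrak g_{\mathbf u},\mathfrak g_{\mathbf v}\}-\{\mathfrak g_{\mathbf u+\mathbf v},\mathfrak g_{\mathbf v}\}-\{\mathfrak g_{\mathbf u},\mathfrak g_{\mathbf u+\mathbf v}\}\bigr)=0\in H^2(S,\Z(2)).
\]
Set $\mathbf w=-(\mathbf u+\mathbf v)$ and use (ii) together with antisymmetry. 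The expression becomes $\{\mathfrak g_{\mathbf u},\mathfrak g_{\mathbf v}\}+\{\mathfrak g_{\mathbf v},\mathfrak g_{\mathbf w}\}+\{\mathfrak g_{\mathbf w},\mathfrak g_{\mathbf u}\}$, which is exactly the expression killed by $2N^3d_N$ in the Corollary to Theorem \ref{thm:bpps}.

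The obstacle is the degenerate cases, where one of $\mathbf u,\mathbf v,\mathbf u+\mathbf v$ is $0$. Here the Corollary does not apply, but the convention $\mathfrak g_0=1$ should make the relation hold trivially. For instance, if $\mathbf u=0$ both sides reduce to $\{\mathfrak g_0,\cdot\}$-terms or $\{\mathfrak g_{\mathbf v},\mathfrak g_{\mathbf v}\}$-type terms. We would then need $\{x,x\}=\{x,-1\}$ to vanish after multiplication by $M$. This holds because $2\{x,x\}=0$ and $M$ is even, so the case is handled.
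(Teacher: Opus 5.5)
Your proposal is correct and follows essentially the same route as the paper. Both arguments check the Bykovskii relations pointwise in $\phi$, use $\mathfrak g_{-\mathbf x}=\mathfrak g_{\mathbf x}$ to turn the three-term relation into the Manin relation of the Corollary to Theorem~\ref{thm:bpps}, and dispose of the degenerate cases using $2\{x,x\}=0$ and the evenness of $M$. You are more explicit than the paper, which only writes out the three-term relation and simply asserts that $\mathfrak g$ is even. Your passage from evenness of $G$ to evenness of $g^{12N^2}$ is sketchy, but any residual sign $\pm1$ is harmless after multiplying by the even integer $M$, as you note.
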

\begin{proof}
    The equivariance is formal assuming well-definedness. Using the unimodular Bykovskii presentation \cite{Byk} of the Steinberg module $\mathrm{St}(\Q^n)$, it suffices to check that the image under the above-defined map of
    \[
    [e_1,\ldots, e_n]- [e_1+e_2,e_2,\ldots, e_n] + [e_1+e_2,e_1,e_3,\ldots, e_n]
    \]
    vanishes. This amounts to relations of the form 
    \[
    M (\{ \mathfrak{g}_{\mathbf{x}_1}, \mathfrak{g}_{\mathbf{x}_2}\} - \{ \mathfrak{g}_{\mathbf{x}_1+\mathbf{x}_2}, \mathfrak{g}_{\mathbf{x}_2}\} + \{ \mathfrak{g}_{\mathbf{x}_1+\mathbf{x}_2}, \mathfrak{g}_{\mathbf{x}_1}\}) \smile \{\mathfrak{g}_{\mathbf{x}_3}, \ldots, \mathfrak{g}_{\mathbf{x}_n}\}.
    \]
    Note that $\mathfrak{g}_{-\mathbf{x}} = \mathfrak{g}_{\mathbf{x}}$ for any $\mathbf{x}$. When any of $\mathbf{x}_1,\ldots, \mathbf{x}_n, \mathbf{x}_1+\mathbf{x}_2$ are zero, then the relation degenerates into a triviality since the Steinberg symbol is alternating after multiplication by $2$. Otherwise, 
    \[
    M (\{ \mathfrak{g}_{\mathbf{x}_1}, \mathfrak{g}_{\mathbf{x}_2}\} - \{ \mathfrak{g}_{\mathbf{x}_1+\mathbf{x}_2}, \mathfrak{g}_{\mathbf{x}_2}\} + \{ \mathfrak{g}_{\mathbf{x}_1+\mathbf{x}_2}, \mathfrak{g}_{\mathbf{x}_1}\})  = M (\{ \mathfrak{g}_{\mathbf{x}_1}, \mathfrak{g}_{\mathbf{x}_2}\} +\{\mathfrak{g}_{\mathbf{x}_2}, \mathfrak{g}_{-\mathbf{x}_1-\mathbf{x}_2} \} + \{ \mathfrak{g}_{-\mathbf{x}_1-\mathbf{x}_2}, \mathfrak{g}_{\mathbf{x}_1}\})  
    \]
    is precisely a Manin relation of the form we established.
\end{proof}

\subsection{Intrinsic formulation}

As we have phrased it thus far, everything requires only the naive higher Chow group formulation of section 1, besides the parenthetical remark on cdh-sheafified analogues for $S$ not regular. Using the language of $\DM$ of global quotient stacks available from section $2$, we may obtain somewhat finer structural results with motivic coefficients. Along the way, we categorify the assignment of theta functions to degree-zero torsion divisors into a statement on motivic sheaves. 

\begin{remark}In this article, we restrict our attention to understanding integrality issues in the results of \cite{BPPS} for elliptic schemes over regular bases and untwisted motivic coefficients. We expect it may be profitable to apply the formalism below also to (e.g.) cup products of elliptic units, Beilinson's Eisenstein symbols, or even higher abelian polylogarithm classes, which we expect may be shown to satisfy analogous relations.\end{remark}

Let now $\pi:\mathcal{E}\to \mathcal{M}:=\mathcal{M}(\Gamma)$ be the universal elliptic scheme over a smooth global quotient stack of level $\Gamma\le \SL_2(\Z)$: for example, we may take $\mathcal{M}$ to be the moduli stack of all elliptic curves over $\Z$ (which is a global quotient stack via the Weierstrass presentation), or add arbitrary arithmetic level structure so long as we take care to remove non-smooth loci at the ramified primes.

Fix an integer $N>1$ invertible on the base, let $T:=\mathcal{E}[N]$ (a finite \'{e}tale group scheme), $i:T\to \mathcal{E}$ be the closed immersion, and write $j:\mathcal{U}=\mathcal{E}-\mathcal{E}[N]\hookrightarrow \mathcal{E}$ for the complement. Similarly, we write $T_0$ for $\mathcal{E}[N]-\{0\}$, $i_0$ for the inclusion, and $j_0: \mathcal{U}_0\to \mathcal{E}$ for its complement.

The theta function assignment of Proposition \ref{prop:integral-theta} we now upgrade to morphisms of motivic sheaves. Define the ``group ring of $\mathcal{E}[N]_U$'' object
\[
    \DM(\mathcal{M}) \ni P :=[T]_{\mathcal{M}}:= (\pi|_T)_*1_T
\]
to be the motivic sheaf corresponding to the finite flat group scheme $T$. We have a map $(\pi_T)_*:[T] \to 1_{\mathcal{M}}$ in $\DM(\mathcal{M})$ coming from the multiplication-by-$N$ map $[N]:T\to \mathcal{M}$, inducing on pullback an ``augmentation map'' we denote $\varepsilon: P \to 1_U$. We define the ``augmentation ideal'' $I$ by the fiber sequence
\[
    I\to P\xrightarrow{\varepsilon} 1_{\mathcal{M}}
\]
We have a group difference map $\delta: P\otimes P \to P$, coming from the difference map $T\times_{\mathcal{M}} T \to T$ (i.e. the group law precomposed with $(1, \mathrm{inverse})$), as well as a divisor difference map $d: P\otimes P\to P$ coming from the difference of the maps induced by the two projections $T^2\to T$. From the universal property of the cone, we may see that the latter factors through a map we also denote $d: P\otimes P \to I$ (``difference of two torsion sections is zero''). 

When $T$ is a constant group scheme over $\mathcal{M}$, all these above constructions reduce to linear algebra; e.g. $P= \bigsqcup_{\mathbf{x} \in \pi_0(T)} 1_{\mathcal{M}}$, etc. In such a case, associated to an $N$-torsion section $\mathbf{x}$ we will write $1(\mathbf{x}):1_{\mathcal{M}} \to P$ for the inclusion coming from the pushforward from that connected component. 

We will frequently use the base-change of these constructions to the open complement $U$, as well as to the complement of the zero-avoiding $N$-torsion $U_0:= E - (T-\{0\})$, which we denote by subscripts.

\begin{theorem} \label{thm:motivic-integral-theta}
    There exist unique homotopy classes of morphism in $\DM(U)$, respectively $\DM(U_0)$,
    \[
    \theta: I_U\to 1_U(1)[1], \;\;\; \theta_0: I_{0, U_0} \to 1_{U_0}(1)[1] 
    \]
    such that the restriction of $12\cdot \theta_0$ yields $\theta$, and applying $\Hom_{h\DM(U_0)}(1_{U_0,-})$ to the latter recovers the map of Proposition \ref{prop:integral-theta} from degree-zero $N$-torsion divisors on $T_0$ to units on $U_0$. The analogous map applied to $\theta$, correspondingly, recovers the map from degree-zero $N$-torsion divisors on $T$ to units on $U$, needing the extra factor of $12$, as discussed in Remark \ref{rem:12}.

    The statements for $U$, the non-zero avoiding version, hold even without the assumption that $N$ is invertible/$T$ is \'{e}tale.
\end{theorem}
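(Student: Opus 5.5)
The construction runs through the localization triangle of the previous section and purity for the finite étale (or merely finite flat) closed immersion $i:T\hookrightarrow \mathcal{E}$, where $T=\mathcal{E}[N]$. Take $E=1$ and $f=\pi$ of relative dimension $1$. Localization and purity give a triangle in $\DM(\mathcal{E})$
\[
i_*1_T(-1)[-2]\to 1_{\mathcal{E}}\to j_*1_{\mathcal{U}}\xrightarrow{\partial} i_*1_T(-1)[-1].
\]
Twisting by $(1)[1]$ and applying $\pi_*$ gives a fiber sequence
\[
\pi_*1_{\mathcal{E}}(1)[1]\to \pi_*j_*1_{\mathcal{U}}(1)[1]\to P.
\]
Here $P=(\pi|_T)_*1_T$; by the identification $g_*=g_!$ and relative dimension-$0$ purity, $P$ is self-dual, $P\simeq \underline{\Hom}(P,1)$. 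The plan is to produce $\theta$ as the adjoint of a lift of $I\to P$ through this fiber sequence, or dually a map $I_U\to 1_U(1)[1]$ after pulling back along $\mathcal{U}\to\mathcal{M}$. In other words, $\theta$ is the ``unit with prescribed divisor'' map, and the task is to lift the divisor map across the boundary.

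The proof proceeds in the following steps.

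\begin{enumerate}
\item Compute $\pi_*1_{\mathcal{E}}(1)[1]$ using the motivic decomposition of an elliptic curve:
\[
\pi_*1_{\mathcal{E}}\simeq 1\oplus h^1(\mathcal{E})[-1]\oplus 1(-1)[-2].
\]
This gives
\[
\pi_*1_{\mathcal{E}}(1)[1]\simeq 1(1)[1]\oplus h^1(\mathcal{E})(1)\oplus 1[-1].
\]
The map $P\to \pi_*1_{\mathcal{E}}(1)[2]$ is the sheafified cycle-class/Picard map, and its $1[0]$-component is exactly the augmentation $\varepsilon$. Restricting to $I$ kills this component, so the obstruction to lifting lands in $\Hom(I,h^1(\mathcal{E})(1)[1])$. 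This group is the motivic sheafification of $\Pic^0=\mathcal{E}$.
\item Show that this obstruction is annihilated by $N$. Its composite with $[N]^*$ factors through multiplication by $N$ on $\mathcal{E}$, and $[N]$ kills $T$; this is the sheaf analogue of ``$\mathcal{O}(ND)$ is pulled back from the base.'' The same step requires trivializing $e^*\mathcal{O}(ND)$, and it is here that the divisor being disjoint from $0$ matters. For $I_{0}$ no Hodge-bundle correction appears; for $I$, one must kill the class of $\omega^{-1}$ in $\Pic(\mathcal{M})$, which costs the factor $12$.
\item Once lifts exist, remove the ambiguity. The ambiguity is $\Hom(I,1(1)[1])$, coming from units on the base. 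The projectors $[a]_*-a^2$ for $a\equiv 1 \pmod N$ act on $P$ through the group structure: they are trivial on $T$, so $[a]$ acts as the identity on $P$. They act on $\pi_*1(1)[1]$ via the degree of $[a]$, i.e. by $a^2$. Averaging as in Proposition~\ref{prop:integral-theta} then pins down a unique invariant lift after the factor $d_N$. Uniqueness also forces $12\theta_0|=\theta$, since both sides are invariant lifts of the same divisor map.
\item Compatibility with Proposition~\ref{prop:integral-theta} follows by taking $\Hom(1,-)$ and applying Proposition~\ref{prop:comparison}. This identifies the degree-$(1,1)$ groups with units, and the boundary with the divisor map.
\end{enumerate}

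Uniqueness at the level of homotopy classes also requires $\Hom(I,\pi_*1(1)[1])$ to have no further homotopy-class ambiguity beyond units. In particular, one needs
\[
\Hom(I,1[-1])=0 \quad\text{and}\quad \Hom(I,h^1(1))=0.
\]
This vanishing should follow from $I$ being a summand-like Artin object of weight $0$ and negativity (Beilinson–Soulé-type vanishing is not needed in these weights). It still needs checking for stacks and for non-étale $T$.

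I expect the main obstacle to be step 2, carried out sheaf-theoretically rather than pointwise. One must identify $\Hom(I,h^1(\mathcal{E})(1)[1])$ with a group on which $[N]$-torsion arguments apply, and track the Hodge-bundle term $e^*$. I would first try to reduce to the universal case, the moduli stack with the universal $T$, using base change of $\DM$ along $\mathcal{M}(\Gamma)\to\mathcal{M}$. There the relevant Hom groups become (equivariant) Picard groups computable by Proposition~\ref{prop:comparison} on a smooth atlas.
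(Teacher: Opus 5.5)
Your reformulation is sound: by adjunction, $\Hom_{\DM(U)}(I_U,1_U(1)[1])\cong\Hom_{\DM(\mathcal M)}(I,\pi_*j_*1_U(1)[1])$, so $\theta$ amounts to lifting a multiple of $I\to P$ through the pushed-forward localization sequence. But the argument stops where the content lies. Step 2 is only asserted. That step says the obstruction in $\Hom(I,\pi_*1_{\mathcal E}(1)[2])$ is killed by $N$ (resp.\ $12N$), integrally, as a map of motivic sheaves over a stack. The tool you propose for it is not available in the form you use it.

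Integrally you can split $1$ and $1(-1)[-2]$ off $\pi_*1_{\mathcal E}$ using the zero section. The resulting decomposition is not compatible with $[a]_*$ or $[N]_*$. By proper base change, $e^*\circ[a]_*=(\pi|_{\mathcal E[a]})_*\circ i_{\mathcal E[a]}^*$. For instance, for a nonzero $a$-torsion section $\mathbf x$ you get $e^*[a]_*\mathcal O(\mathbf x)=e^*\mathcal O(e)\cong\omega^{-1}$, while $e^*\mathcal O(\mathbf x)$ is trivial. So neither ``$[N]_*$ acts by $N$ on $h^1$'' nor ``$[a]_*$ acts by $a^2$ on $\pi_*1(1)[1]$'' can be used as stated; the latter holds only on the image of $\pi^*$. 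The canonical $[n]$-eigenspace decomposition of an abelian scheme exists only after inverting integers, which is exactly what this theorem must avoid.

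The identification of $\Hom(I,h^1(\mathcal E)(1)[1])$ with an $\mathcal E$-valued group, and the vanishing of $\Hom(I,h^1(1))$ needed for uniqueness, are likewise left open. Your uniqueness step would itself be fine given those vanishings, since then every ambiguity factors through $\pi^*$ and $[a]_*\pi^*=a^2\pi^*$. Finally, your starting triangle uses purity for $i:T\hookrightarrow\mathcal E$, which needs $T$ smooth over $\mathcal M$. So it cannot reach the non-\'etale clause.

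The missing idea, which lets the paper avoid all of this, is to work on $U$ itself with the fiber sequence $I_U\to P_U\to 1_U$, rather than pushing forward to $\mathcal M$. Adjunction for the finite \'etale map $T_U\to U$ gives $\Hom(P_U,1_U(1)[k])\cong H^k(T_U,\Z(1))$, hence an exact sequence
\[
\Gamma(U,\G_m)\to\Gamma(T_U,\G_m)\to\Hom_{h\DM(U)}(I_U,1_U(1)[1])\to\Pic(U)\to\Pic(T_U).
\]
So a homotopy class $I_U\to 1_U(1)[1]$ is a line bundle on $U$ with a trivialization on $T_U$. By descent for $\G_m$-torsors, which holds integrally, this is equivalently a unit $\Theta$ on $T_U\times_U T_U\cong U\times_{\mathcal M}T\times_{\mathcal M}T$ satisfying the cocycle conditions.

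Over the base $T\times_{\mathcal M}T$ the two torsion points become tautological sections $\mathbf p,\mathbf q$, so Proposition~\ref{prop:integral-theta} applies verbatim. $\Theta$ is the Lieberman-normalized unit with divisor $12Nd_N([\mathbf p]-[\mathbf q])$, or $Nd_N([\mathbf p]-[\mathbf q])$ in the zero-avoiding case. The cocycle conditions follow from uniqueness of the normalized lift together with $([\mathbf p]-[\mathbf q])+([\mathbf q]-[\mathbf r])=[\mathbf p]-[\mathbf r]$ on $T^3$.

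This reduces the sheaf-level statement to the pointwise one over a base where the torsion points are sections. It needs no decomposition of $\pi_*1_{\mathcal E}$ and no sheaf-level torsion argument. Replacing \'etale descent by fppf descent is how the paper addresses the non-\'etale clause.
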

\begin{proof}
    From the above fiber sequences defining the augmentation objects, we get fiber sequences of mapping spaces
\[
\Hom_{\DM(U)}(I_U, 1_U(1)[1]) \to \Hom_{\DM(U)}(1_U, 1_U(1)[2]) \xrightarrow{\varepsilon_*} \Hom_{\DM(U)}(R, 1_U(1)[2])\]
and analogously for the omitting-zero variants, whose corresponding long exact sequence of homotopy groups (i.e. Hom-groups in $h\DM(U)$) yields
\[
   \Gamma(U, \G_m)\xrightarrow{\pi_T^*}  \Gamma(T_U, \G_m)\to \Hom_{h\DM(U)}(I_U, 1_U(1)[1]) \to \Pic(U) \xrightarrow{\pi_T^*} \Pic(T_U).
\]
For the avoiding-zero motives, we analogously obtain 
\[
   \Gamma(U_0, \G_m)\xrightarrow{\pi_{T_0}^*}  \Gamma((T_0)_{U_0}, \G_m)\to \Hom_{h\DM(U_0)}(I_{U_0}, 1_{U_0}(1)[1]) \to \Pic(U_0) \xrightarrow{\pi_{T_0}^*} \Pic((T_0)_{U_0}).
\]
Thus, to single out a homotopy class of morphism in the triangulated category, it is equivalent to give a line bundle $\mathcal{L}$ on $U$ (resp. $U_0$) along with the rigidification data of specific trivializations 
\[
t:\pi_T^*\mathcal{L} \cong \mathcal{O}_{T_U}
\]
on $T_U$ (resp. on $(T_0)_{U_0}$). Indeed, by the standard \v{C}ech descent formalism for line bundles, such data is \emph{equivalent} to specifying a descent datum
\[
    \Theta \in \Gamma(T_U \times_U T_U, \G_m)
\]
subject to the conditions:
\begin{enumerate}
    \item The restriction to the diagonal $\Delta: T_U \hookrightarrow T_U \times_U T_U$ of $\Theta$ is $1$.
    \item The involution on $T_U\times_U T_U$ swapping the two coordinates sends $\Theta$ to $\Theta^{-1}$.
    \item Writing $\pi_{ab}$ for the projection onto the $a$th and $b$th coordinates of $T_U \times_U T_U \times_U T_U$, we have $\pi_{12}^*\Theta \cdot \pi_{23}^*\Theta = \pi_{13}^*\Theta$.
\end{enumerate}
The line bundle $\mathcal{L}$ is then obtained by gluing the trivial line bundle on $\mathcal{O}_{T_U}$ along this descent datum for the finite \'{e}tale cover $T_U\to U$. (Here, we may instead use fppf descent if we drop the assumption that $T$ is \'{e}tale.)

In fact, this is precisely what is produced by the method of Proposition \ref{prop:integral-theta}: in the weight-one motivic localization sequence for 
\[
T_U \times_U T_U \cong U_{T\times T}:= U\times_{\mathcal{M}} T \times_{\mathcal{M}} T,
\]
inside $\mathcal{E}_{T\times T}:= \mathcal{E}\times_{\mathcal{M}} T \times_{\mathcal{M}} T$, observe that we have a pair of tautological sections 
\[
\mathbf p: T \times_{\mathcal{M}} T \to \mathcal{E}_{T\times T},\;\;\; \mathbf q: T \times_{\mathcal{M}} T \to \mathcal{E}_{T\times T}
\]
given by the graphs of the inclusions $T \times_{\mathcal{M}} T \to \mathcal{E}_{T} \times_{\mathcal{M}} T$ and $T \times_{\mathcal{M}} T \to T\times_{\mathcal{M}} \mathcal{E}_{T}$, respectively. Then the difference $[\mathbf p]-[q]$ is a degree-zero $N$-torsion divisor, hence by the already-used Lieberman's trick, $12Nd_N ([\mathbf p]-[\mathbf q])$ lifts to a unique trace-fixed unit on $U_{T\times T}$. We define $\Theta$ to be precisely this lift.

Along the diagonal, $[\mathbf p]-[\mathbf q]$ restricts to the zero divisor, so the restriction of the Lieberman-projected unit is then $1$, showing property (1). By functoriality, the involution of (2) changes $\Theta$ to the lift of $12Nd_N([\mathbf q]-[\mathbf p])$, which is $\Theta^{-1}$ by uniqueness. Finally, we can run this same construction for the three tautological sections $\mathbf p,\mathbf q,\mathbf r$ of 
\[
    T_U \times_U T_U \times_U T_U \cong U_{T\times T\times T}:= U\times_{\mathcal{M}} T \times_{\mathcal{M}} T \times_{\mathcal{M}}T.
\]
Then formally, the construction for, e.g., $[\mathbf p]-[\mathbf r]$ on this space is the pullback of the $2$-arity construction on coordinates $1$ and $3$, and similarly for the other pairs. Hence under the various identifications, the cocycle identity (3) reduces to Lieberman uniqueness together with the identity of divisors
\[
([\mathbf p]-[\mathbf q]) + ([\mathbf q]-[\mathbf r])= ([\mathbf p]-[\mathbf r]).
\]
All of these arguments work similarly for the zero-restricted analogues under the assumption that $T$ is \'{e}tale, except that we need only lift $Nd_N([\mathbf p]-[\mathbf q])$ instead of $12Nd_N([\mathbf p]-[\mathbf q])$ for the reasons previously discussed. 

We take the resulting maps in the triangulated category $I_U\to 1_U(1)[1]$, $I_{U_0}\to 1_{U_0}(1)[1]$, to be our $\theta$ and $\theta_0$. The resulting maps on motivic cohomology may then be computed via the preceding \v{C}ech descent to send $X-Y$ to $\Theta_{X\times Y}$ for equal-degree cycles $X,Y\subset T$, where this notation means the restriction to $U\times_{\mathcal{M}} X \times_{\mathcal{M}} Y \subset U_{T\times T}$, which recovers precisely the construction of Proposition \ref{prop:integral-theta} in the case of full level where $X$ and $Y$ are sections.
\end{proof}

We therefore obtain symbol maps, for any $k\ge 1$,
\[
\theta^{k}: I_U^{\otimes k} \to 1_U(k)[k], \;\;\;\theta_0^{k}: I_{0,U_0}^{\otimes k} \to 1_{U_0}(k)[k]
\]
by taking the $k$th tensor powers of $\theta$ and $\theta_0$ respectively inside $\DM(U)$, respectively $\DM(U_0)$; these maps are $S_k$-equivariant for the permutation action on the source and the sign character on the target.

\begin{remark}
If $T$ is constant, on cohomology the precomposition with the morphism from the unit 
\[
(1(\mathbf{u})-1(\mathbf{v})) \otimes (1(\mathbf{x})-1(\mathbf{y})) :1_{\mathcal{M}} \to I_{\mathcal{M}}^{\otimes 2}
\]
induces the previously-considered Beilinson--Kato element $\{G_{\mathbf{u},\mathbf{v}}, G_{\mathbf{x},\mathbf{y}}\}\in H^2(\mathcal{M},\Z(2))$ for any nonzero sections $\mathbf{u,v,x,y}\in \pi_0(T)$. Our construction's new generality appears when $T$ is not constant, where one may generally only find morphisms from the unit $1_{\mathcal{M}}$ to $I_{\mathcal{M}}^{\otimes 2}$ corresponding to ``glued together combinations'' of torsion sections, yielding lower-level Beilinson--Kato elements.
\end{remark}

Meanwhile, the elements $\Xi$ of the previous section came from considering
\[
\sum_{\mathbf{x}\in T} ([\mathbf{a}]-[\mathbf{x}]) \otimes ([\mathbf{b}]-[\mathbf{x}]) \otimes ([\mathbf{c}]-[\mathbf{x}]) \in H^0(U, I^{\otimes 3})
\]
under the assumption that $T$ was a constant group scheme; Goncharov's summation trick was then applied after taking residue and trace of this map; the pushforwards and residues of these elements were built out of the homogeneous elements $\Psi(\mathbf{a}:\mathbf{b}:\mathbf{c})$.

Here, we instead construct analogues of all of these at the motivic sheaf level: we have a morphism of sheaves
\[
    \Psi: P^{\otimes 3} \to 1_{\mathcal{M}}(2)[2]
\]
as follows: since $T$ is \'{e}tale, we may decompose $T^3$ into the disjoint union of the configuration space over $\mathcal{M}$ of $3$ distinct points on $T$ and the fat diagonal (where points coincide); this corresponds to a decomposition of the motivic sheaves
\[
P^{\otimes 3} = C_3 \sqcup F_3
\]
and in particular we obtain a projection map $P^{\otimes 3}\to C_3$ coming from pullback of the inclusion of that factor. Now let $\pi_{12}: P^{\otimes 3} \to P^{\otimes 2}$ come from the projection of $T^3$ onto the first two coordinates, and so on for the other pairs of indices. 

On $C_3$, one may check via the fiber sequence that the composition of its pushforward inclusion on $P^{\otimes 3}$ with
\[
(\delta\circ \pi_{12})\otimes (\delta\circ \pi_{23}) \otimes (\delta\circ \pi_{31}): P^{\otimes 3}\to P^{\otimes 3}
\]
factors through $P^{\otimes 3}_0$ (i.e. ``differences of distinct torsion sections avoid zero''), and we call this map
\[
   C(\delta):  C_3 \to P^{\otimes 3}_0.
\]
Write correspondingly the Siegel unit specializations
\[
G: P_0^{\otimes 2} \xrightarrow{d} I_0 \xrightarrow{e^*\theta_0} 1_{\mathcal{M}}(1)[1], 
\]
and so on. We may now define $\Psi$ to be the composite
\[
P^{\otimes 3} \xrightarrow{\mathrm{project}} C_3 \xrightarrow{C(\delta)} P^{\otimes 3}_0 \xrightarrow{G_{12} \smile G_{32}} 1_{\mathcal{M}}(2)[2] 
\]
where $G_{ij}:= G \circ \pi_{ij}$, and the cup product 
\[
G_{12} \smile G_{32}: P_0^{\otimes 3} \to 1_{\mathcal{M}}(2)[2],
\]
is constructed using the coproduct as the composite
\[
P_0^{\otimes 3} \xrightarrow{\Delta^{\otimes 3}} P_0^{\otimes 3} \otimes P_0^{\otimes 3} \xrightarrow{G_{12} \otimes G_{32}} 1_{\mathcal{M}}(1)[1] \otimes  1_{\mathcal{M}}(1)[1] \cong 1_{\mathcal{M}}(2)[2].
\]
In the full-level $N$ setting, the precomposition of this map with 
\[
1(\mathbf{u})\otimes 1(\mathbf{v}) \otimes 1(\mathbf{w}): 1_{\mathcal{M}} \to P^{\otimes 3}
\]
yields on motivic cohomology precisely the element $\Psi(\mathbf{u}:\mathbf{v}:\mathbf{w})$ defined in the previous section. (Note that the projection of $P^{\otimes 3}$ onto $C_3$ exactly is the ``extension by zero by fiat''.)

We define now
\[
\Xi: P^{\otimes 4}_U \to 1_{U}(3)[3]
\]
by the composite
\[
P^{\otimes 4}_U \xrightarrow{\mathrm{project}} C_3 \xrightarrow{D} I^{\otimes 3}_{0,U} \xrightarrow{\theta^{\otimes 3}} 1_U(3)[3].
\]
where $D=(d\circ \pi_{14}) \otimes (d\circ \pi_{24}) \otimes (d\circ \pi_{34})$. With full level, the ``value'' in motivic cohomology of this map at $1(\mathbf{a})\otimes 1(\mathbf{b})\otimes 1(\mathbf{c}) \otimes 1(\mathbf{x})$ is $\Xi_{\mathbf{x}}(\mathbf{a},\mathbf{b}, \mathbf{c})$ in the notation of the previous section. 

By adjunction, we obtain a map in $\DM(\mathcal{M})$
\[
\Xi^\sharp: P^{\otimes 4} \to \pi_*1_U(3)[3].
\]
We write $\mathcal{R}$ in $\DM(\mathcal{M})$
\[
\pi_*1_U(3)[3] \xrightarrow{\partial} P(2)[2] \xrightarrow{\pi_*} 1_{\mathcal{M}}(2)[2]
\]
for the indicated composition of the pushforward by $\pi$ of the residue map in the localization sequence for $1_\mathcal{E}(3)[3]$ on $\mathcal{E}$, with the trace by $\pi|_T:T\to \mathcal{M}$).

The following is the lift of \cite[Lemma 3.2.1]{BPPS} to motivic sheaves:
\begin{proposition}
We have
\[
    \mathcal{R}\circ \Xi^\sharp = Nd_N (\Psi \circ \pi_{123} - \Psi \circ \pi_{124}-\Psi \circ \pi_{143}-\Psi \circ \pi_{423})
\]  
as homotopy classes of maps $P^{\otimes 4}\to 1_{\mathcal{M}}(2)[2]$ in $h\DM(\mathcal{M})$.
\end{proposition}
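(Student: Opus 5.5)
The plan is to reduce the identity of homotopy classes $P^{\otimes 4}\to 1_{\mathcal{M}}(2)[2]$ to the full-level computation of \cite[Lemma 3.2.1]{BPPS} (as used in Lemma \ref{lem:321}) by a ``universal case'' argument. Both sides are built from the same natural operations: pushforwards along maps of finite \'etale schemes, diagonals, group differences, the classes $\theta$ and $\theta_0$ of Theorem \ref{thm:motivic-integral-theta}, the localization residue, and the trace.

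\textbf{Step 1: reduction to the base change along $T^4\to\mathcal{M}$.} Set $g:T^4:=T\times_{\mathcal{M}}T\times_{\mathcal{M}}T\times_{\mathcal{M}}T\to\mathcal{M}$, which is finite \'etale. Since $P^{\otimes 4}=g_*1_{T^4}=g_!1_{T^4}$, the adjunction $g_!\dashv g^!=g^*$ identifies maps $P^{\otimes 4}\to 1_{\mathcal{M}}(2)[2]$ with classes in $H^2(T^4,\Z(2))$. So it suffices to compare the two classes on $T^4$, i.e.\ after base change to $\mathcal{M}':=T^4$.

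Over $\mathcal{M}'$ there are four tautological sections $\mathbf{a},\mathbf{b},\mathbf{c},\mathbf{x}$ of $\mathcal{E}'[N]$. Each construction involved is compatible with base change along $g$:
\begin{itemize}
\item $\theta$ and $\theta_0$ are characterized uniquely by the Lieberman-projected descent data, which is stable under pullback;
\item the localization triangle commutes with smooth base change;
\item the trace for $\pi|_T$ base-changes by proper base change.
\end{itemize}
Hence the left side pulls back to the class $(\pi|_{T'})_*\partial\,\Xi_{\mathbf{x}}(\mathbf{a},\mathbf{b},\mathbf{c})$. The right side pulls back to the combination of $\Psi(\cdot:\cdot:\cdot)$ evaluated at the tautological sections, with the ``extension by zero by fiat'' projection onto $C_3$ killing the fat-diagonal components.

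\textbf{Step 2: the computation on $\mathcal{M}'$.} First decompose $\mathcal{M}'$ into the open-and-closed pieces where the sections coincide or not; this is possible since $T$ is \'etale. On each piece, the divisor of $\theta_{[\mathbf{a}]-[\mathbf{x}]}$ is supported on the tautological sections. On the piece where all four sections are distinct and nonzero, the residue of the triple symbol is computed by the tame symbol formula \eqref{eq:cup}: at the section $\mathbf{a}$, the residue of $\{\theta_{[\mathbf{a}]-[\mathbf{x}]},\theta_{[\mathbf{b}]-[\mathbf{x}]},\theta_{[\mathbf{c}]-[\mathbf{x}]}\}$ is $Nd_N$ times the restriction of the remaining two units. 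These restrictions are Siegel units $e^*\theta$ after translating by $-\mathbf{a}$, using translation-compatibility of $\theta$ from uniqueness. Summing the residues at $\mathbf{a},\mathbf{b},\mathbf{c},\mathbf{x}$ and tracing gives exactly the BPPS combination of Lemma \ref{lem:321}.

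On degenerate pieces, both sides vanish or match by the stated convention. When $\mathbf{x}=0$ (or another section is $0$), apply the translation trick from the proof of Lemma \ref{lem:321}. Over $\mathcal{M}'$ this is done by passing to a further finite \'etale cover carrying an extra section $\mathbf{t}$, but then injectivity is lost integrally. Instead, I would avoid this by noting that the formula only involves $\theta_0$ through $G=e^*\theta_0\circ d$ evaluated at differences, which are nonzero on $C_3$.

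\textbf{Main obstacle.} The hard part is making the residue computation rigorous at the level of the sheaf $\partial$: one has to identify the localization boundary of a cup product of units with the tame symbol. The route I would try first is the Leibniz rule for $\partial$ with respect to cup product by classes defined on all of $\mathcal{E}$. After shrinking to a neighbourhood of each section where the other two units are invertible, this reduces the claim to the weight-one statement that $\partial$ of a unit equals its divisor. That statement holds by the construction of $\theta$ in Theorem \ref{thm:motivic-integral-theta} together with Proposition \ref{prop:comparison}.
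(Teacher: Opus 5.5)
Your proposal is the paper's proof. The paper likewise uses the adjunction for the finite \'etale map $T^4\to\mathcal{M}$ to identify both sides with classes in $H^2(T^4,\Z(2))$, observes that these are the same constructions base-changed to $T^4$ and evaluated on the four tautological sections, and then defers to the computation of \cite[Lemma 3.2.1]{BPPS} (as in Lemma \ref{lem:321}); your Step 2 unpacks that computation in more detail. Two caveats on that extra detail, which the paper's proof also passes over: your zero-section workaround only handles the right-hand side, because on components of $T^4$ where, say, $\mathbf{x}=0$ the divisors $[\mathbf{a}]-[\mathbf{x}]$ fed into $\Xi$ themselves meet the zero section and so lie outside the domain $I_0$ of $\theta_0$; and at the section $\mathbf{x}$ all three units $u_{\mathbf{y}}:=\theta_{[\mathbf{y}]-[\mathbf{x}]}$ have poles, so before your Leibniz-rule reduction you must rewrite the symbol as $\{u_{\mathbf{a}},u_{\mathbf{b}}/u_{\mathbf{a}},u_{\mathbf{c}}/u_{\mathbf{a}}\}$, which integrally introduces extra $\{-1,\cdot\}$ terms that BPPS, working with $\Q$-coefficients, never has to track.
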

\begin{proof}
Homotopy classes of maps
\[
P^{\otimes 4}\to 1_{\mathcal M}(2)[2]
\]
are the same, by adjunction of pushforward/pullback under $T^4\to \mathcal{M}$, as classes in
\[
\Hom_{h\DM(T^4)}(1_{T^4},1_{T^4}(2)[2])
\]
and it thus suffices to check that the two sides correspond under adjunction to the same motivic cohomology class in $T^4$: tracing the definition of the adjunction, this motivic cohomology class is none other than this very same construction base changed from $\mathcal{M}$ to $T^4$, applied to tensor product of the four tautological coaugmentations in $P_{T^4}^{\otimes 4}$, $1(\mathbf a)\otimes 1(\mathbf b) \otimes 1(\mathbf c)\otimes 1(\mathbf x)$. But then this calculation then just formally reduces to the elementary one in \cite[Lemma 3.2.1]{BPPS} we cited in Lemma \ref{lem:321}; i.e., it reduces to an equality of formal cup products
\[
    ((\pi\times_{\mathcal{M}} T^4)|_{T_{T^4}})_*
    \partial\Xi_{\mathbf{x}}(\mathbf{a},\mathbf{b},\mathbf{c})
    =
    Nd_N(
     \Psi(\mathbf{a}:\mathbf{b}:\mathbf{c})
    -\Psi(\mathbf{a}:\mathbf{b}:\mathbf{x})
    -\Psi(\mathbf{a}:\mathbf{x}:\mathbf{c})
    -\Psi(\mathbf{x}:\mathbf{b}:\mathbf{c})
    ).
\]
\end{proof}

Write now $\eta:1_{\mathcal{M}}\to P$ for the ``sum over $T$'' coaugmentation map coming from pullback by the structure map of $T$. The ``summing over $\mathbf{x}$'' trick of the previous section now amounts to precomposing the equation of relations in the preceding proposition with $\mathrm{id}^{\otimes 3} \otimes \eta: P^{\otimes 3} \to P^{\otimes 4}$. Then we have the categorified Goncharov trick:

\begin{lemma}
    We have $2 \cdot \Psi \circ (\mathrm{id}^{\otimes 2} \otimes \eta)=0$ as maps $P^{\otimes 2} \to 1_{\mathcal{M}}(2)[2]$ (and similarly, by symmetry, for the other orderings of $\mathrm{id},\mathrm{id},\eta$ in the three tensor factors).
\end{lemma}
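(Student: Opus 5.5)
We plan to follow the same strategy as the proof of the preceding proposition: reduce to a universal full-level computation by adjunction, and there run the finite-group argument of \cite[Proposition 3.2.3]{BPPS}. First, by adjunction for the finite étale map $T^2\to\mathcal{M}$, homotopy classes of maps $P^{\otimes 2}\to 1_{\mathcal M}(2)[2]$ correspond to classes in $H^2(T^2,\Z(2))$. The class attached to $\Psi\circ(\mathrm{id}^{\otimes 2}\otimes\eta)$ is obtained by base changing the construction to $T^2$ and evaluating at the tautological coaugmentations $1(\mathbf a)\otimes 1(\mathbf b)$. After base change, $T_{T^2}$ still need not be constant. We will therefore base change further along the finite étale cover $\mathcal{M}'\to T^2$ trivializing $T$, i.e.\ the full level-$N$ cover. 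Injectivity of pullback fails integrally, so we will not argue by descent. Instead we compute directly: $\eta$ pulled back is $\sum_{\mathbf x}1(\mathbf x)$, and the claim becomes $2\sum_{\mathbf x}\Psi(\mathbf a:\mathbf b:\mathbf x)=0$. To keep integrality, we will express this sum as the trace along $T\to\mathcal{M}$ (over $T^2$) of a single class defined on $T^3$, which does not require trivializing $T$.

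Second, the key identity. Using $G_{\mathbf u,\mathbf v}=e^*\theta_{[\mathbf v]-[\mathbf u]}$ and multiplicativity, write
\[
\Psi(\mathbf a:\mathbf b:\mathbf x)=\{G_{\mathbf a-\mathbf b,\mathbf b-\mathbf x},G_{\mathbf x-\mathbf a,\mathbf b-\mathbf x}\}.
\]
As $\mathbf x$ ranges over $T$, the element $\mathbf y=\mathbf b-\mathbf x$ ranges over $T$ as well. Goncharov's trick is the following. The map $\mathbf y\mapsto$ (first slot depending on the fixed $\mathbf a-\mathbf b$, second on $\mathbf y$) combines with the involution $\mathbf y\mapsto(\mathbf a-\mathbf b)-\mathbf y$, equivalently $\mathbf x\mapsto \mathbf a+\mathbf b-\mathbf x$, which swaps the roles of $\mathbf x-\mathbf a$ and $\mathbf b-\mathbf x$ up to sign. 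Under this involution each summand goes to the symbol with its entries exchanged, using $G_{-\mathbf u,-\mathbf v}=G_{\mathbf u,\mathbf v}$ from $[-1]$-invariance and uniqueness in Proposition \ref{prop:integral-theta}. The symbol is graded-commutative, so $\{f,g\}=-\{g,f\}$ in $H^2(-,\Z(2))$. Hence the sum $S$ satisfies $S=-S$, i.e.\ $2S=0$, with any additional contributions appearing as symbols $\{u,u\}$. Categorically, we will realize this as an automorphism $\sigma$ of $T^3$ over $T^2$, namely $(\mathbf a,\mathbf b,\mathbf x)\mapsto(\mathbf a,\mathbf b,\mathbf a+\mathbf b-\mathbf x)$, preserving $C_3$ and the fat diagonal. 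We will check that $\Psi_{T^3}\circ\sigma^*$ equals $\Psi_{T^3}$ composed with the swap of the two $\G_m$ factors. The swap acts on $1(1)[1]^{\otimes 2}$ by $-1$ on homotopy classes. The trace is invariant under $\sigma$, which gives $2\cdot\mathrm{tr}\,\Psi_{T^3}=0$.

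The main obstacle will be the second step: verifying honestly at the sheaf level that $\sigma$ intertwines $G_{12}\smile G_{32}$ with the swapped cup product. This requires compatibility of $\theta_0$ with $[-1]$ and with translation, since $\sigma$ is not a group automorphism. We will handle it by proving these functorialities for $\Theta$ from the uniqueness (Lieberman projector) characterization in Theorem \ref{thm:motivic-integral-theta}. We must also confirm that the fat-diagonal terms, which are set to zero by the projection, are permuted among themselves. The other orderings of $\mathrm{id},\mathrm{id},\eta$ follow identically, or by the cyclic symmetry $\Psi(\mathbf a:\mathbf b:\mathbf c)=\Psi(\mathbf b:\mathbf c:\mathbf a)$ up to the same sign manipulations.
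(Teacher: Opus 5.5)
Your route is the paper's: adjunction to a class in $H^2(T^2,\Z(2))$ given as a trace along $\pi_{12}:T^3\to T^2$, then the involution $\sigma:(\mathbf a,\mathbf b,\mathbf x)\mapsto(\mathbf a,\mathbf b,\mathbf a+\mathbf b-\mathbf x)$ over $T^2$. This $\sigma$ is exactly the composite of the paper's four substitutions: translation by $-\mathbf a-\mathbf b$, transposing the first two slots, simultaneous inversion, and $\mathbf y\mapsto-\mathbf y$.

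The gap is in the step you defer. $\Psi\circ\sigma$ is \emph{not} $\Psi$ followed by the swap, and the ``additional contributions $\{u,u\}$'' do not vanish termwise. Write $u=\mathbf a-\mathbf b$, $v=\mathbf b-\mathbf x$, $w=\mathbf x-\mathbf a$. Then $C(\delta)\circ\sigma=\tau_{23}\circ C(\delta)$, where $\tau_{23}$ swaps the last two tensor factors, so $\Psi\circ\sigma=\{G_{u,w},G_{v,w}\}$. Bilinearity plus additivity in the divisor ($G_{u,w}=G_{u,v}G_{v,w}$ and $G_{v,w}=G_{w,v}^{-1}$) then give
\[
\Psi\circ\sigma=-\Psi+\{G_{v,w},G_{v,w}\}=-\Psi+\{G_{\mathbf b-\mathbf x,\mathbf x-\mathbf a},-1\}.
\]
The last term is $2$-torsion but not zero in general: nothing makes $G_{v,w}$ a square, and for odd $N$ the exponent $Nd_N$ is odd. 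So as written your argument only yields $4S=0$.

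The missing step is that these correction terms cancel after the trace, not termwise. By the projection formula,
\[
(\pi_{12})_*\{G_{\mathbf b-\mathbf x,\mathbf x-\mathbf a},-1\}=\{\mathrm{N}_{\mathbf x}(G_{\mathbf b-\mathbf x,\mathbf x-\mathbf a}),-1\}.
\]
Since $G=e^*\theta_0\circ d$ is additive in the divisor, this norm is $e^*\theta_0$ applied to $\pm\sum_{\mathbf x\neq\mathbf a,\mathbf b}([\mathbf x-\mathbf a]-[\mathbf b-\mathbf x])$. That divisor is $0$, because $\mathbf x\mapsto\mathbf x-\mathbf a$ and $\mathbf x\mapsto\mathbf b-\mathbf x$ both carry $\{\mathbf x\neq\mathbf a,\mathbf b\}$ isomorphically onto $\{\mathbf y\neq 0,\mathbf b-\mathbf a\}$ over $T^2$. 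Sheaf-theoretically, the two composites with $\eta$ into $I_0$ agree. With this, $S=-S$ and $2S=0$. The paper's ``$\Psi$ is alternating'' silently drops the same $\{\,\cdot\,,-1\}$ term, so this step is needed there too.

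The obstacle you anticipated is not one. Because $C(\delta)\circ\sigma=\tau_{23}\circ C(\delta)$ already holds at the level of finite \'etale schemes, you need no compatibility of $\theta_0$ with $[-1]$ or with translations, only additivity of $G$. One minor slip: for $\mathbf y=\mathbf b-\mathbf x$ the involution is $\mathbf y\mapsto(\mathbf b-\mathbf a)-\mathbf y$, not $(\mathbf a-\mathbf b)-\mathbf y$.
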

\begin{proof}
    Let, as usual, $\pi_{12}:T^3\longrightarrow T^2$ be the projection. Using the same adjunction trick as the previous proof to identify our map with an element of $H^2(T^2, \Z(2))$, our element 
    \[
        \Sigma:=\Psi\circ(\mathrm{id}^{\otimes2}\otimes\eta)
    \]
    corresponds to
    \[
        (\pi_{12})_*\Psi(\mathbf a:\mathbf b:\mathbf x).
    \]
    where here this the map $\Psi$ precomposed with $1(\mathbf{a})\otimes 1(\mathbf{b}) \otimes 1(\mathbf{x})$ for $\mathbf{a},\mathbf{b},\mathbf{x}$ the tautological coaugmentations of $P^{\otimes 3}_{T^3}$.

    For each pair $\mathbf a,\mathbf b$, we have
    \[
    \Psi(\mathbf a:\mathbf b:\mathbf x)
    =
    \Psi(-\mathbf b:-\mathbf a:\mathbf x-\mathbf a-\mathbf b).
    \]
    The automorphism $T^3\to T^3$ given by
    \[
    (\mathbf a,\mathbf b,\mathbf x)
    \mapsto
    (\mathbf a,\mathbf b,\mathbf x-\mathbf a-\mathbf b)
    \]
    is an automorphism over $T_2$, hence it does not change the trace along
    $q$, and the change of variable $\mathbf y:=\mathbf x-\mathbf a-\mathbf b$ yields
    \[
    \Sigma
    =
    (\pi_{12})_*\Psi(-\mathbf b:-\mathbf a:\mathbf y).
    \]
    Since $\Psi$ is alternating in its three arguments,
    \[
    (\pi_{12})_*\Psi(-\mathbf b:-\mathbf a:\mathbf y)
    =
    -(\pi_{12})_*\Psi(-\mathbf a:-\mathbf b:\mathbf y).
    \]
    By invariance under simultaneous inversion,
    \[
    \Psi(-\mathbf a:-\mathbf b:\mathbf y)
    =
    \Psi(\mathbf a:\mathbf b:-\mathbf y).
    \]
    Finally, $\mathbf y\mapsto-\mathbf y$ is another automorphism of the fibers of $q$, so
    \[
    \Sigma
    =
    -\Sigma, \mapsto 2\Sigma =0.
    \]
\end{proof}

We therefore finally deduce the categorified version of the rank-$2$ Bykovskii relation:
\begin{theorem} \label{thm:sheafrel}
    We have $2N^3d_N \Psi =0$ as maps $P^{\otimes 3} \to 1_{\mathcal{M}}(2)[2]$. 
\end{theorem}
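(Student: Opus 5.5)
The plan is to precompose the identity of the preceding Proposition with $\mathrm{id}^{\otimes 3}\otimes\eta: P^{\otimes 3}\to P^{\otimes 4}$, which is the categorified version of summing Lemma \ref{lem:321} over $\mathbf{x}$.

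\emph{Left-hand side.} The composite $\mathcal{R}\circ\Xi^\sharp$ should vanish. Here $\mathcal{R}$ is the pushforward along $\pi$ of the residue map for the localization triangle of $1_{\mathcal{E}}(3)[3]$ along $i:T\hookrightarrow\mathcal{E}$, followed by the trace for $\pi|_T$. As in the proof of Theorem \ref{thm:main}, this trace factors through $\pi_*i_*i^!1_{\mathcal{E}}(3)[3]\to\pi_*1_{\mathcal{E}}(3)[3]$, using purity $i^!\simeq i^*(-1)[-2]$ and $\pi_*=\pi_!$. So $\mathcal{R}$ factors through the composite of two consecutive arrows in the pushed-forward triangle
\[
\pi_*i_*i^!\to\pi_*1_{\mathcal{E}}\to\pi_*j_*j^*1_{\mathcal{E}}\to\pi_*i_*i^![1],
\]
and is therefore zero as a map $\pi_*1_U(3)[3]\to 1_{\mathcal{M}}(2)[2]$. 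In particular $\mathcal{R}\circ\Xi^\sharp=0$ already in $h\DM(\mathcal{M})$, before precomposing with $\eta$. This gives
\[
Nd_N\bigl(\Psi\circ\pi_{123}-\Psi\circ\pi_{124}-\Psi\circ\pi_{143}-\Psi\circ\pi_{423}\bigr)=0 .
\]

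\emph{Right-hand side.} Precompose with $\mathrm{id}^{\otimes3}\otimes\eta$. The projection $\pi_{123}$ does not involve the fourth factor. Its composite with $\mathrm{id}^{\otimes 3}\otimes\eta$ is therefore $\mathrm{id}_{P^{\otimes 3}}$ tensored with the composite $1_{\mathcal M}\xrightarrow{\eta}P\to 1_{\mathcal M}$. The second map is the trace of the constant function along the finite \'etale degree-$N^2$ cover $T\to\mathcal{M}$, i.e. pushforward then pullback of $1$, so this composite is multiplication by $\deg(T/\mathcal{M})=N^2$. The first term thus becomes $N^2\Psi$. (I would justify this degree computation by the same adjunction-to-$T^3$ reduction as in the previous proofs; on a constant cover it is the count $|E[N]|=N^2$.)

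For each of the other three terms, $\pi_{124}\circ(\mathrm{id}^{\otimes3}\otimes\eta)$ equals $\bigl(\mathrm{id}^{\otimes 2}\otimes\eta\bigr)\circ\pi_{12}$, where on the right $\pi_{12}:P^{\otimes3}\to P^{\otimes2}$, and similarly for $\pi_{143}$ and $\pi_{423}$ with $\eta$ placed in the other slots. These are naturality identities for the projection and unit maps, again checked by adjunction on $T^3$. By the categorified Goncharov lemma just proved, each such term is killed by $2$. Multiplying the whole relation by $2$ then gives
\[
2Nd_N\cdot N^2\Psi=2N^3d_N\Psi=0 .
\]

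The main obstacle is bookkeeping rather than conceptual. One must check that the identification $\pi_{ijk}\circ(\mathrm{id}^{\otimes3}\otimes\eta)$ with the correct slot of $\eta$ is compatible with the orientation/ordering conventions (e.g.\ $\pi_{143}$ versus $\pi_{134}$). Since $\Psi$ is alternating up to sign, this only affects signs, and a term killed by $2$ remains so. One must also check that ``project to $C_3$'' commutes with these projections, which is fine because the fat diagonal terms are zero by fiat on both sides. I would verify all of these after pulling back to $T^3$, where the sections are tautological and the statement reduces to the full-level computation.
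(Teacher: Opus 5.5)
Your proposal is correct and follows the paper's proof. You precompose the preceding Proposition's identity with $\mathrm{id}^{\otimes 3}\otimes\eta$, kill the left side via the factorization of the trace from Theorem \ref{thm:main}, and invoke the categorified Goncharov lemma for the $\pi_{124},\pi_{143},\pi_{423}$ terms, while the $\pi_{123}$ term becomes $N^2\Psi$. Your explicit degree-$N^2$ computation and your remark that $\mathcal{R}$ itself already vanishes (so precomposing with $\eta$ only matters on the right-hand side) spell out details the paper leaves implicit.
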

\begin{proof}
    By Theorem \ref{thm:main}, $R\circ \Xi^\sharp \circ (\mathrm{id}^{\otimes 3} \otimes \eta)=0$, and the terms involving $\pi_{124},\pi_{143},\pi_{423}$ on the other side in the preceding proposition vanish by the Goncharov trick. The conclusion follows.
\end{proof}

At full level, evaluating the previous theorem in motivic cohomology at $1(\mathbf{a})\otimes 1(\mathbf{b})\otimes 1(\mathbf{c})$ yields the previous section's identity
\[
N^3d_N\Psi(\mathbf{a}:\mathbf{b}:\mathbf{c})=0
\]
which may be used to construct modular symbols valued in distributions over Beilinson--Kato elements at full level. More generally, even if $\mathcal{M}$ has coarser than full-level-$N$ structure, one can associate coaugmentation maps $1_{\mathcal{M}}\to P$ to the connected components of $T$ over $\mathcal{M}$, and thereby obtain refined identities between Beilinson--Kato elements at this coarser level.

As before, we may package our results into modular symbols. For this, we need a ``pulled-back to the base'' sheafified version of the individual Beilinson--Kato units $\mathfrak{g}_{\mathbf{a}}:=g_{\mathbf{a}}^{12 N^2}$. Indeed, the same adjunction trick we have been using provides this: we have a universal element
\[
    \mathfrak{g}'\in H^1(T-\{0\},\Z(1))
\]
coming from running the standard construction for the bundle $E_T$ (whose specialization produced $\mathfrak{g}_{\mathfrak{a}}$ over a full-level base) and specializing at its tautological $N$-torsion section. We upgrade this to an element $\mathfrak{g}\in H^1(T, \Z(1))$ by simply taking the multiplicative identity on the connected component $\{0\}$ over $\mathcal{M}$. Under our adjunction, this exactly gives a morphism
\[
    \mathfrak{g}: P \to 1_{\mathcal{M}}(1)[1]
\]
which specializes at full level under precomposition with $1(\mathbf{a})\to P$ to $\mathfrak{g}_{\mathbf{a}}$. Then via the same adjunction argument, the identity
\[
    \mathfrak{g} \circ \pi_1 - \mathfrak{g} \circ \pi_2 = G \in \Hom_{h\DM(\mathcal{M})}(P^{\otimes 2}_0, 1_{\mathcal{M}}(1)[1]) 
\]
follows from the same universal identity on $T^2$, computable precisely as in the previous section. Plugging this into the definition of $\Psi$, we may then deduce:
\begin{proposition}
We have an identity of homotopy classes of maps
    \[
    2N^3d_N\Psi = 2N^3d_N(\mathfrak{g} \smile \mathfrak{g}) \circ \delta^{\otimes 2} \circ (\pi_{12} \otimes \pi_{23}+\pi_{23} \otimes \pi_{31}+\pi_{31} \otimes \pi_{12})
    \]
between $P^{\otimes 3}$ and $1_{\mathcal{M}}(2)[2]$, and consequently, both sides equal zero.
\end{proposition}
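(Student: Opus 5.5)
The plan is to use the adjunction trick from the preceding proofs to reduce the identity to one between motivic cohomology classes on $T^3$. Pushforward and pullback along $T^3\to\mathcal{M}$ identify homotopy classes of maps $P^{\otimes 3}\to 1_{\mathcal M}(2)[2]$ with $H^2(T^3,\Z(2))$. Under this identification each side becomes the same construction, base changed to $T^3$ and evaluated on the tautological sections $1(\mathbf a)\otimes 1(\mathbf b)\otimes 1(\mathbf c)$. Over $T^3$ the group scheme acquires the three tautological sections, so we are in the full-level-style setting of the previous subsection, at least after restricting to the relevant components. Since $T^3=C_3\sqcup F_3$, it suffices to check the identity separately on $C_3$ and on the fat diagonal $F_3$.

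\textbf{On $C_3$.} By definition, $\Psi$ evaluates to $\{G_{\mathbf a-\mathbf b,\mathbf b-\mathbf c},G_{\mathbf c-\mathbf a,\mathbf b-\mathbf c}\}$. By the identity $\mathfrak g\circ\pi_1-\mathfrak g\circ\pi_2=G$ established just before the statement, applied on $T^2$ and pulled back, we may write
\[
G_{\mathbf a-\mathbf b,\mathbf b-\mathbf c}=\mathfrak g_{\mathbf a-\mathbf b}-\mathfrak g_{\mathbf b-\mathbf c}
\]
additively in $H^1$, and similarly for the other factor. Expanding the cup product bilinearly gives four terms. We then use:
\begin{itemize}
\item $\{\mathfrak g_{\mathbf x},\mathfrak g_{\mathbf x}\}$ is $2$-torsion, since $\{u,u\}=\{u,-1\}$;
\item the symbol is antisymmetric;
\item $\mathfrak g_{-\mathbf x}=\mathfrak g_{\mathbf x}$.
\end{itemize}
Together these reduce the expansion to the cyclic sum
\[
\{\mathfrak g_{\mathbf a-\mathbf b},\mathfrak g_{\mathbf b-\mathbf c}\}+\{\mathfrak g_{\mathbf b-\mathbf c},\mathfrak g_{\mathbf c-\mathbf a}\}+\{\mathfrak g_{\mathbf c-\mathbf a},\mathfrak g_{\mathbf a-\mathbf b}\},
\]
modulo $2$-torsion. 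This is exactly what the right-hand side $(\mathfrak g\smile\mathfrak g)\circ\delta^{\otimes 2}\circ(\pi_{12}\otimes\pi_{23}+\dots)$ evaluates to. The factor $2$ in $2N^3d_N$ kills the $\{u,-1\}$ discrepancies, so the identity holds on $C_3$.

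\textbf{On $F_3$.} The left side vanishes by the extension-by-zero convention. For the right side, the convention $\mathfrak g_0=1$ (the multiplicative identity on the component $\{0\}$) makes every term with a vanishing difference contribute zero. The remaining term, with nonzero difference $\mathbf x$, is of the form $\{\mathfrak g_{\mathbf x},\mathfrak g_{\mathbf x}\}$ or its negative, which is $2$-torsion, so it is again killed by the factor $2$.

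\textbf{Conclusion.} The final claim, that both sides vanish, then follows immediately from Theorem \ref{thm:sheafrel}.

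\textbf{Main obstacle.} The difficult step is making the cup-product manipulations legitimate at the level of maps of motivic sheaves rather than elementwise. This requires checking that the cup product defined via the diagonal $\Delta^{\otimes 3}$ corresponds, under the adjunction, to the actual cup product in $H^*(T^3,\Z(*))$ of the pulled-back classes. It also requires that the identity for $G$, which is a priori a map out of $P_0^{\otimes 2}$, correctly pulls back along $C(\delta)$. I would verify both points by the same base-change-to-the-universal-case argument used in the preceding proposition, where the diagonal becomes the literal diagonal of $T^3$ and the statement reduces to graded-commutativity and bilinearity of the product on motivic cohomology.
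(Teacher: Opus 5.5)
Your approach is the paper's own. Its entire argument is ``plug $\mathfrak g\circ\pi_1-\mathfrak g\circ\pi_2=G$ into the definition of $\Psi$,'' followed by Theorem~\ref{thm:sheafrel}. Your adjunction reduction to $H^2(T^3,\Z(2))$ and the $C_3\sqcup F_3$ split are a more careful version of that, and your fat-diagonal check is correct: there the right side is $0$ or $\pm\{\mathfrak g_{\mathbf x},\mathfrak g_{\mathbf x}\}$.

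The computation on $C_3$, however, has a sign error. Put $X=\mathfrak g_{\mathbf a-\mathbf b}$, $Y=\mathfrak g_{\mathbf b-\mathbf c}$, $Z=\mathfrak g_{\mathbf c-\mathbf a}$, and let $S=\{X,Y\}+\{Y,Z\}+\{Z,X\}$, which is the right-hand side evaluated at $(\mathbf a,\mathbf b,\mathbf c)$. The expansion is
\[
\{X-Y,\,Z-Y\}=\{X,Z\}-\{X,Y\}-\{Y,Z\}+\{Y,Y\}=-S+\{Y,Y\},
\]
which is the \emph{reversed} cyclic sum. This agrees with the paper's ``moral'' formula for $\Psi(\mathbf a:\mathbf b:\mathbf c)$ in the elementary section. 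An overall sign in the convention for $G$ cannot change it, since $G$ enters twice. Neither $\mathfrak g_{-\mathbf x}=\mathfrak g_{\mathbf x}$ nor discarding $2$-torsion turns $-S$ into $S$; that would require $4N^3d_N S=0$, which you do not yet know at this stage.

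So the expansion proves only $2N^3d_N\Psi=-2N^3d_N\cdot(\text{right-hand side})$ on all of $T^3$, not the stated equality. The fix is to reorder the logic:
\begin{enumerate}
\item Theorem~\ref{thm:sheafrel} gives $2N^3d_N\Psi=0$.
\item The signed identity then shows that $2N^3d_N$ times the right-hand side vanishes.
\item The displayed equality therefore holds as $0=0$.
\end{enumerate}
The paper's one-line ``plugging in'' has the same orientation slip and is rescued the same way, so the proposition's equality is really a consequence of the vanishing rather than of the expansion.

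A minor point: $2\{u,u\}=0$ already follows from graded commutativity, since $\{u,u\}=-\{u,u\}$. You therefore do not need to appeal to $\{u,u\}=\{u,-1\}$ over a general base.
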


We may organize this into a modular symbol as follows: the maps $\mathfrak{g}^{\otimes n}:P^{\otimes n} \to 1_{\mathcal{M}}(n)[n]$ are precisely the motivic sheaf analogue of the distributions $\boldsymbol{\mu}^n$ in \cite{BPPS}. 

Let $\GL_n(\Z)$ act on $P^{\otimes n}$ via its action on $T^n$ identified as $\underline{\Hom}(\Z^n, T)$ (as functors of points on the \'{e}tale site of $\mathcal{M}$), and $\Z^n$ given its usual left action. Note that when $T$ is the constant group scheme, this is compatible with the action defined before Theorem \ref{thm:modnaive}. 

Then the following is deduced from the previous proposition in the same way as that theorem:

\begin{theorem} \label{thm:mod}
     The assignment 
     \[
     \mathrm{St}(\Q^n) \to \Hom_{h\DM(\mathcal{M})}(P^{\otimes n}, 1_{\mathcal{M}}(n)[n])
     \]
     sending $[e_1,\ldots, e_n]\mapsto 2N^3 d_N\mathfrak{g}^{\otimes n}$ and extended by formal $\GL_n(\Z)$-equivariance gives a well-defined modular symbol, compatible with base change between different $\mathcal{M}$.
\end{theorem}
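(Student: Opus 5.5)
The plan is to imitate the proof of Theorem \ref{thm:modnaive}, replacing each computation in motivic cohomology of $S$ by the corresponding identity of homotopy classes of maps out of $P^{\otimes n}$. The adjunction trick of the preceding proofs reduces every such identity to an identity of classes on $T^n$ evaluated at the tautological sections.

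\textbf{Step 1 (presentation).} Use the unimodular Bykovskii presentation \cite{Byk}. As a $\Z[\GL_n(\Z)]$-module, $\mathrm{St}(\Q^n)$ is generated by $[e_1,\ldots,e_n]$, subject to three kinds of relations:
\begin{itemize}
\item sign relations $[\epsilon v_1, v_2, \ldots, v_n] = [v_1,\ldots,v_n]$ for $\epsilon = \pm 1$;
\item permutation relations $[v_{\sigma(1)},\ldots,v_{\sigma(n)}] = \mathrm{sgn}(\sigma)[v_1,\ldots,v_n]$;
\item the three-term relation $[e_1,\ldots,e_n] - [e_1+e_2,e_2,\ldots,e_n] + [e_1+e_2,e_1,e_3,\ldots,e_n] = 0$.
\end{itemize}
So it suffices to show that the element $2N^3d_N\mathfrak{g}^{\otimes n}$ is fixed by the stabilizer of the generator up to these relations, i.e. that the images of the relations vanish.

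Here $\gamma \in \GL_n(\Z)$ acts on maps $P^{\otimes n} \to 1_{\mathcal M}(n)[n]$ by precomposition with the induced automorphism of $P^{\otimes n}$ coming from $T^n = \underline{\Hom}(\Z^n,T)$. Equivariance and compatibility with base change in $\mathcal M$ are then formal, because every construction ($\mathfrak g$, $\delta$, the $\pi_{ij}$, tensor powers) commutes with pullback along $\mathcal M' \to \mathcal M$.

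\textbf{Step 2 (sign and permutation relations).} For the sign relation, the universal identity $\mathfrak g \circ [-1] = \mathfrak g$ on $T$ follows from uniqueness in Proposition \ref{prop:integral-theta}, since $[-1]$ preserves the relevant divisor and the trace-fixed normalization.

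For the permutation relation, $\mathfrak g^{\otimes n}$ is $S_n$-equivariant with the sign character. The target $1(n)[n]$ carries the graded-commutativity sign, which holds for $H\Z$-modules; at worst it holds up to $2$-torsion, which is absorbed by the factor $2$.

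\textbf{Step 3 (three-term relation).} Precompose with the automorphisms of $T^n$ indexed by the three apartments. The image of the relation is then
\[
2N^3d_N\,\bigl(\mathfrak g\smile\mathfrak g\bigr)\circ(\ldots) \smile \mathfrak g^{\otimes(n-2)},
\]
where the first factor is the rank-$2$ expression $\{\mathfrak g_{x_1},\mathfrak g_{x_2}\}-\{\mathfrak g_{x_1+x_2},\mathfrak g_{x_2}\}+\{\mathfrak g_{x_1+x_2},\mathfrak g_{x_1}\}$, realized as a map out of $P^{\otimes 2}$ by pulling back along $T^2\to T^3$, $(x_1,x_2)\mapsto(0,x_1,x_1+x_2)$ or a similar map, and composing with the preceding proposition's right-hand side. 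Rewrite this using $\mathfrak g(-x)=\mathfrak g(x)$ and antisymmetry into the cyclic Manin form. That proposition says this cyclic form vanishes after multiplying by $2N^3d_N$, so the whole tensor product vanishes.

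\textbf{Main obstacle: degenerate loci.} The degenerate cases, where some $x_i$ or $x_1+x_2$ is $0$, are where the naive proof worked pointwise. Sheafily, $T^2$ decomposes into a configuration part $C$ and a fat diagonal $F$, and the relation must be checked on each summand separately.
\begin{itemize}
\item On $F$ the identity uses $\mathfrak g|_{\{0\}}=1$ and $2\{u,u\}=0$, i.e. alternation after multiplication by $2$.
\item On $C$ we must match the chosen pullback map with the one defining $\Psi$ through $C(\delta)$ and $G=\mathfrak g\circ\pi_1-\mathfrak g\circ\pi_2$.
\end{itemize}
I expect this bookkeeping — that the decomposition $P^{\otimes 3}=C_3\sqcup F_3$ is compatible with the reparametrization $T^2\to T^3$ — to be the main technical point. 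It is handled by again reducing via adjunction to an identity on $T^2$ evaluated at tautological sections.
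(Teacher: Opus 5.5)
Your proposal is correct and takes the paper's route. The paper deduces Theorem \ref{thm:mod} from the preceding proposition in the same way as Theorem \ref{thm:modnaive}: the Bykovskii presentation, $\mathfrak{g}_{-\mathbf{x}}=\mathfrak{g}_{\mathbf{x}}$, antisymmetry, and reducing the three-term relation to the Manin form; your pullback along $(x_1,x_2)\mapsto(0,x_1,x_1+x_2)$ and your explicit check of the sign and permutation relations fill in what the paper leaves implicit.

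The degenerate-locus bookkeeping you flag as the main obstacle is already covered by that proposition. It is stated for maps out of all of $P^{\otimes 3}$, fat diagonal included, where the factor $2$ kills the $\{u,u\}$ terms, so pulling back its vanishing already suffices.
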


As usual, at full level, precomposing with suitable tensors of the trivialized coaugmentations $1(\mathbf{x})$ exactly recovers Theorem \ref{thm:modnaive}, and thus also, upon rationalization and base change up the infinite-level modular tower over $\mathbb{C}$, the main result of \cite{BPPS}. In general, at coarser levels we obtain finer integral results from coaugmentations coming from glued-together combinations of torsion sections.

\begin{remark}
We remark that when $\mathcal{E}$ is replaced by an elliptic curve with CM by an imaginary quadratic order $\mathcal{O}$ with fraction field $K$, the exact same methods afford us also modular symbols for $\mathrm{St}(K^n)$ only when $\mathcal{O}$ is a PID and admits a Bykovskii presentation, which is generally not the case (though it is true for $\mathcal{O}=\Z[i],\Z[\omega]$ \cite{KMPW}). More relations are needed to obtain a modular symbol in general, and we expect that variants of our Suslin reciprocity argument should yield them (cf. our joint article \cite{SX} in the function field setting). In particular, this could be applied to the Sharifi conjectures when $n=2$, which may be worth further investigation (cf. \cite{LSSW}).
\end{remark}

{
\printbibliography
}

\end{document}